\numberwithin{equation}{section}
\newtheorem{theorem}{Theorem}[section]
\newtheorem{proposition}[theorem]{Proposition}
\newtheorem{lemma}[theorem]{Lemma}
\newtheorem{corollary}[theorem]{Corollary}
\newtheorem{theorem*}{Theorem}
\theoremstyle{definition}
\newtheorem{definition}[theorem]{Definition}
\newtheorem{example}[theorem]{Example}
\theoremstyle{remark}
\newcommand{\Mod}{\operatorname{Mod}}
\newcommand{\Hom}{\operatorname{Hom}}
\newcommand{\Ext}{\operatorname{Ext}}
\newcommand{\A}{\mathscr{A}}
\newcommand{\id}{\operatorname{id}}
\newcommand{\ra}{\rightarrow}
\newcommand{\C}{\mathscr{C}}
\newcommand{\res}{\operatorname{res}}
\newcommand{\cores}{\operatorname{cores}}
\def\Im{\mathop{\rm Im}\nolimits}
\def\Coker{\mathop{\rm Coker}\nolimits}
\def\pd{\mathop{\rm pd}\nolimits}
\def\id{\mathop{\rm id}\nolimits}
\def\Mod{\mathop{\rm Mod}\nolimits}
\title{ \bf One-Sided Gorenstein Subcategories \thanks{2010 Mathematics Subject Classification: 18G25, 16E10, 18G10.}
\thanks{Keywords: Right Gorenstein subcategories, Self-orthogonal subcategories, Relative projective dimension,
Cotorsion pairs, Kernels, (Weak) Auslander-Buchweitz contexts.
}}
\author{Weiling Song$^{a}$,
Tiwei Zhao$^{b}$, Zhaoyong Huang$^{c,}$\thanks{{\it E-mail address}: songwl@njfu.edu.cn (W. L. Song), tiweizhao@qfnu.edu.cn (T. W. Zhao),
huangzy@nju.edu.cn (Z. Y. Huang)}\\
{\footnotesize \it $^a$Department of Applied Mathematics, College of Science, Nanjing Forestry University,}\\
{\footnotesize \it Nanjing 210037, Jiangsu Province, P.R. China;}\\
{\footnotesize \it $^b$School of Mathematical Sciences, Qufu Normal University, Qufu 273165, Shandong Province, P.R. China;}\\
{\footnotesize \it $^c$Department of Mathematics, Nanjing University, Nanjing 210093, Jiangsu Province, P.R. China}}
\date{ }
\begin{document}

\baselineskip=16pt
\maketitle

\begin{abstract}
We introduce the right (left) Gorenstein subcategory relative to
an additive subcategory $\C$ of an abelian category $\A$, and prove that the right Gorenstein subcategory
$r\mathcal{G}(\mathscr{C})$ is closed under extensions, kernels of epimorphisms, direct summands and finite direct sums.
When $\C$ is self-orthogonal,  we give a characterization for objects in $r\mathcal{G}(\mathscr{C})$,
and prove that any object in $\A$ with finite $r\mathcal{G}(\C)$-projective dimension
is isomorphic to a kernel (resp. a cokernel) of a morphism from an object in $\A$
with finite $\C$-projective dimension to an object in $r\mathcal{G}(\C)$. As an application,
we obtain a weak Auslander-Buchweitz context related to the kernel of a hereditary cotorsion pair in $\A$
having enough injectives.
\end{abstract}

\pagestyle{myheadings}
\markboth{\rightline {\scriptsize   W. L. Song, T. W. Zhao, Z. Y. Huang}}
         {\leftline{\scriptsize  One-Sided Gorenstein Subcategories}}


\section{Introduction} 

As a nice generalization of finitely generated projective modules,
Auslander and Bridger \cite{AB} introduced  finitely generated modules
having Gorenstein dimension zero over commutative Noetherian rings. For arbitrary
modules over general rings, Enochs and Jenda \cite{EJ95} introduced Gorenstein projective dimension,
which coincides with  Gorenstein dimension  for finitely generated modules
over commutative Noetherian rings; meanwhile, they also introduced Gorenstein injective modules
as the dual of Gorenstein projective modules. Since then, these modules have
become crucial research objects in Gorenstein homological algebra, and have
been studied extensively,
see \cite{AB,AM,C,CFH,CFrH,CI,EJ95,EJ00,Ho04}, and the references therein.

Let $\A$ be an abelian category and $\C$ an additive subcategory of $\A$. As a common generalization
of Gorenstein projective and Gorenstein injective modules, Sather-Wagstaff, Sharif and White \cite{SSW} introduced the
Gorenstein subcategory $\mathcal{G}(\C)$ of $\A$ relative to $\C$. It is shown that Gorenstein subcategories
share many nice properties of the categories of Gorenstein projective modules
and Gorenstein injective modules, see \cite{GD11}, \cite{Hu} and \cite{SSW}.
In \cite[Section 5]{SSW} the authors asked: do some important properties of the category
of Gorenstein projective (injective) modules
such as exactness, closure of kernels of epimorphisms and cokernels of monomorphisms,
hold true for Gorenstein subcategories? In fact, we will answer this question negatively.
Why are the answers negative? From the definition of the Gorenstein subcategory $\mathcal{G}(\C)$,
we know that $\C$ should be simultaneously a generator and a cogenerator for $\mathcal{G}(\C)$.
It leads to some limitations. The aim of this paper is to overcome such limitations by modifying
the definition of  Gorenstein subcategories. The paper is organized as follows.

In Section 2, we give some terminology and notations.

In Section 3, we introduce right (left) Gorenstein
subcategories relative to a subcategory $\C$ of an abelian category $\A$, such that when $\C$ is self-orthogonal,
the Gorenstein subcategory coincides with the intersection of the left and the right
Gorenstein subcategories. We prove that
the right Gorenstein subcategory $r\mathcal{G}(\C)$ is exact and closed under kernels of epimorphisms (Proposition \ref{extension-closed}),
however, the Gorenstein subcategory does not possess such properties in general (Example \ref{3.5}),
which answers \cite[Question 5.8]{SSW} negatively. Under the assumption that $\C$ is self-orthogonal,
we give some equivalent conditions for objects in $r\mathcal{G}(\mathscr{C})$ (Theorem \ref{3.6}), which shows that the subcategory
$r\mathcal{G}(\mathscr{C})$ has some kind of stability. Moreover, we
 prove that an object in $\A$ with finite $r\mathcal{G}(\C)$-projective dimension
is isomorphic to a kernel (resp. a cokernel) of a morphism from an object in $\A$
with finite $\C$-projective dimension to an object in $r\mathcal{G}(\C)$ (Theorem \ref{3.10}).

In Section 4, as an application of the above results, we prove that if a hereditary cotorsion pair $(\mathscr{U},\mathscr{V})$ with kernel $\C$
has enough injectives then
$(r\mathcal{G}(\C),\C$-$\pd^{<\infty},\C)$ is a weak Auslander-Buchweitz context, where
$\C$-$\pd^{<\infty}$ is the subcategory of $\A$ consisting of objects with finite $\C$-projective dimension
(Theorem \ref{4.8}).

\section{Preliminaries}

In this paper, $\A$ is an abelian category and all subcategories of $\A$ are full, additive and closed under isomorphisms.

For a subcategory $\mathscr{X}$ of $\A$, we write
$${^\perp{\mathscr{X}}}:=\{A\in\A\mid\operatorname{Ext}^{\geq 1}_{\A}(A,X)=0 \mbox{ for any}\ X\in \mathscr{X}\},\
{{\mathscr{X}}^\perp}:=\{A\in\A\mid\operatorname{Ext}^{\geq 1}_{\A}(X,A)=0 \mbox{ for any}\ X\in \mathscr{X}\},$$
$${^{\perp_1}{\mathscr{X}}}:=\{A\in\A\mid\operatorname{Ext}^{1}_{\A}(A,X)=0 \mbox{ for any}\ X\in \mathscr{X}\},\
{{\mathscr{X}}^{\perp_1}}:=\{A\in\A\mid\operatorname{Ext}^{1}_{\A}(X,A)=0 \mbox{ for any}\ X\in \mathscr{X}\}.$$
For subcategories $\mathscr{X},\mathscr{Y}$ of $\A$, we write $\mathscr{X}\perp\mathscr{Y}$ if
$\operatorname{Ext}^{\geq 1}_{\A}(X,Y)=0$ for any $X\in \mathscr{X}$ and $Y\in \mathscr{Y}$; and $\mathscr{X}$ is called
{\it self-orthogonal} if $\mathscr{X}\perp\mathscr{X}$.

Let $\C\subseteq \mathscr{X}$ be subcategories of $\A$. We say that $\C$ is a {\it generator} for $\mathscr{X}$
if for any $X\in \mathscr{X}$ there exists an exact sequence
$$0\ra X^{'} \ra C\ra X\ra 0$$ in $\A$ with $X^{'}\in \mathscr{X}$ and $C\in \C$. We say that $\C$ is a
{\it projective generator} for $\mathscr{X}$ if $\C$ is a generator for $\mathscr{X}$ and $\C\perp \mathscr{X}$.
Dually, we say that $\C$ is a {\it cogenerator} for $\mathscr{X}$
if for any $X\in \mathscr{X}$ there exists an exact sequence
$$0\ra X \ra C\ra X^{'}\ra 0$$ in $\A$ with $C\in \C$ and $X^{'}\in \mathscr{X}$. We say that $\C$ is an
{\it injective cogenerator} for $\mathscr{X}$ if $\C$ is a cogenerator for $\mathscr{X}$ and $\mathscr{X}\perp \C$.

Let $\mathscr{X}$ be a subcategory of $\A$ and $A\in\A$. The {\it $\mathscr{X}$-projective dimension}
$\mathscr{X}\mbox{-}\pd A$ of $A$ is defined to be the infimum integer $n$ for which  there exists an exact sequence
$$0\to X_n \to \cdots \to X_1\to X_0\to A\to 0$$
in $\A$ with all $X_i$ in $\mathscr{X}$, and we set $\mathscr{X}\mbox{-}\pd A=\infty$  if no such
integer exists. Dually, the {\it $\mathscr{X}$-injective dimension}
$\mathscr{X}\mbox{-}\id A$ of $A$ is defined to be the infimum integer $n$ for which  there exists an exact sequence
$$0\to A \to X^0 \to X^1 \to \cdots \to X^n\to 0$$
in $\A$ with all $X^i$ in $\mathscr{X}$, and we set $\mathscr{X}\mbox{-}\id A=\infty$  if no such
integer exists. We use $\mathscr{X}\mbox{-}\pd^{<\infty}$ and $\mathscr{X}\mbox{-}\id^{<\infty}$
to denote the subcategories of $\A$ consisting of objects with finite $\mathscr{X}$-projective and
$\mathscr{X}$-injective dimensions, respectively.

A sequence $\mathbb{E}$ in $\A$ is called {\it $\Hom_{\A}(\mathscr{X},-)$-exact}
(resp. {\it $\Hom_{\A}(-,\mathscr{X})$-exact}) if
$\Hom_{\A}(X,\mathbb{E})$ (resp. $\Hom_{\A}(\mathbb{E},X)$) is exact for any $X\in \mathscr{X}$.
Following \cite{SSW}, we write
$$
\res \widetilde{{\mathscr{X}}}:=\left\{A\in\A\mid  \begin{array}{c}
                                                \mbox{there exists an exact $\Hom_{\A}(\mathscr{X},-)$-exact sequence} \\
                                                \cdots\to X_i \to \cdots \to X_1 \to X_0 \to A \to 0 \mbox{ in $\A$ with all $X_i$ in $\mathscr{X}$}
                                              \end{array}
       \right\},
$$
and
$$
\cores \widetilde{{\mathscr{X}}}:=\left\{A\in\A\mid  \begin{array}{c}
                                                \mbox{there exists an exact $\Hom_{\A}(-,\mathscr{X})$-exact sequence} \\
                                                0\to A \to X^0 \to X^1 \to \cdots \to X^i \to \cdots \mbox{ in $\A$ with all $X_i$ in $\mathscr{X}$}
                                              \end{array}
       \right\}.
$$

\begin{definition}\label{2.1}  (\cite{SSW})
Let $\C$ be a subcategory of $\A$. The {\it Gorenstein subcategory}
$\mathcal{G}(\mathscr{C})$ of $\mathscr{A}$ relative to $\C$ is defined to be all
$G\in\mathscr{A}$ such that there exists an exact $\Hom_{\mathscr{A}}(\mathscr{C},-)$-exact and
$\Hom_{\mathscr{A}}(-,\mathscr{C})$-exact sequence
$$\cdots \to C_1 \to C_0 \to C^0 \to C^1 \to \cdots$$ in $\mathscr{A}$
with all $C_i,C^i$ in $\C$, such that $G\cong \Im(C_0\to C^0)$.
\end{definition}

The Gorenstein subcategory unifies the following notions: modules of
Gorenstein dimension zero (\cite{AB}), Gorenstein projective modules,
Gorenstein injective modules (\cite{EJ95}), $V$-Gorenstein projective
modules, $V$-Gorenstein injective modules (\cite{EJL05}),
$\mathscr{W}$-Gorenstein modules (\cite{GD11}), and so on; see \cite{Hu} for the
details.

\begin{definition} (\cite[Chapter 5.5]{Ro09})
A category $\mathscr{E}$ is called  {\it exact}  if $\mathscr{E}$ is a full subcategory of some abelian category $\mathscr{A}$
and it is closed under extensions.
\end{definition}

\section{One-sided Gorenstein Subcategories}

In this section, we fix a subcategory $\C$ of an abelian category $\A$. Following \cite[Lemma 5.7]{Hu},
if $\C\bot\C$ then
$$\mathcal{G}(\C)=({^\perp\mathscr{C}}\cap \cores\widetilde{\mathscr{C}})\cap
({\mathscr{C}^\perp}\cap \res\widetilde{\mathscr{C}}).$$
Motivated by this, we introduce right and left Gorenstein subcategories.

\begin{definition}\label{3.1}
We call
$$r\mathcal{G}(\C):={^\perp\mathscr{C}}\cap \cores\widetilde{\mathscr{C}}\
(resp.\ l\mathcal{G}(\C):={\mathscr{C}^\perp}\cap \res\widetilde{\mathscr{C}})$$ the {\it right}
(resp. {\it left}) {\it Gorenstein subcategory} of $\A$ relative to $\C$.
\end{definition}


In the following example, all rings are associative with identities and all modules are unitary.
For a ring $R$, $\Mod R$ is the category of left $R$-modules.

\begin{example}\label{3.2}
 \begin{enumerate}
\item[]
\item[(1)] In general, we have $r\mathcal{G}(\C)\neq l\mathcal{G}(\C)$. Let $R$ be a ring
and let $\A=\Mod R$. If $\C$ is the subcategory of $\A$ consisting of projective modules,
then $r\mathcal{G}(\C)$ is the subcategory of $\A$ consisting of Gorenstein projective modules and $l\mathcal{G}(\C)=\A$.
If $\C$ is the subcategory of $\A$ consisting of injective modules,
then $r\mathcal{G}(\C)=\A$ and $l\mathcal{G}(\C)$ is the subcategory of $\A$ consisting of Gorenstein injective modules.
\item[(2)] Let $R,S$ be rings and $_RC_S$ a semidualizing $(R,S)$-bimodule.
\begin{enumerate}
\item[(2.1)] If $\A=\Mod R$ and $\C$ is the subcategory of
$\A$ consisting of direct summands of direct sums of copies of $C$,
then $r\mathcal{G}(\C)$ is the subcategory of $\A$ consisting of $G_C$-projective modules by \cite[Proposition 2.4(1)]{LHX13},
and $l\mathcal{G}(\C)$ is the Bass class $\mathscr{B}_C(R)$ with respect to $C$ by \cite[Theorem 3.9]{TH1}.
\item[(2.2)] If $\A=\Mod S$ and $\C$ is the subcategory of
$\A$ consisting of direct summands of direct products of copies of $\Hom_\mathbb{Z}(C, \mathbb{Q}/\mathbb{Z})$,
where $\mathbb{Z}$ is the additive group of integers and $\mathbb{Q}$ is the additive group of rational numbers,
then $r\mathcal{G}(\C)$ is the Auslander class $\mathscr{A}_C(S)$ with respect to $C$ by \cite[Theorem 3.11(1)]{TH2}
and \cite[Proposition VI.5.1]{CE},
and $l\mathcal{G}(\C)$ is the subcategory of $\A$ consisting of $G_C$-injective modules by \cite[Proposition 2.4(2)]{LHX13}.
\end{enumerate}
\end{enumerate}
\end{example}

In the following, we study the properties of $r\mathcal{G}(\C)$. The dual versions of
all the results on $r\mathcal{G}(\C)$ also hold true  on $l\mathcal{G}(\C)$ by using completely dual arguments.


The following result shows that $r\mathcal{G}(\C)$ is an exact category, and has a quasi-resolving structure, that is closed under kernels of  epimorphisms..

\begin{proposition}\label{extension-closed}
\begin{enumerate}
\item[]
\item[(1)] The subcategory $r\mathcal{G}(\C)$ is closed under extensions,
 direct summands and finite direct sums.
\item[(2)] The subcategory $r\mathcal{G}(\C)$ is closed under
kernels of epimorphisms.
\end{enumerate}
\end{proposition}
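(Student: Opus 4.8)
The plan is to treat the two conditions defining $r\mathcal{G}(\C)={}^{\perp}\C\cap\cores\widetilde{\C}$ separately: membership in ${}^{\perp}\C$ is governed by the long exact sequence of $\Ext_{\A}$, and membership in $\cores\widetilde{\C}$ by horseshoe-type constructions of $\Hom_{\A}(-,\C)$-exact $\C$-coresolutions. First the cheap assertions in (1): additivity of $\Ext^{\geq1}_{\A}(-,C)$ shows that ${}^{\perp}\C$ is closed under finite direct sums, direct summands, extensions, kernels of epimorphisms and cokernels of monomorphisms, and $\cores\widetilde{\C}$ is closed under finite direct sums (take the termwise direct sum of two $\Hom_{\A}(-,\C)$-exact $\C$-coresolutions) and under direct summands (split the coresolution). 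Hence $r\mathcal{G}(\C)$ is closed under finite direct sums and direct summands, and what remains of (1) and (2) is the $\cores\widetilde{\C}$-part together with the accompanying $\Ext$-bookkeeping.

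The technical heart I would isolate as a lemma: \emph{$\cores\widetilde{\C}$ is closed under extensions} (a stability property I would either establish directly or invoke from \cite{SSW,Hu}). Given $0\to X'\to X\to X''\to 0$ exact with $X',X''\in\cores\widetilde{\C}$, fix $\Hom_{\A}(-,\C)$-exact $\C$-coresolutions $0\to X'\to E^{\bullet}$ and $0\to X''\to F^{\bullet}$; since $X'\hookrightarrow E^{0}$, $X''\hookrightarrow F^{0}$ and all their cosyzygy inclusions are monic $\C$-preenvelopes, the comparison/lifting machinery for $\C$-preenvelope coresolutions produces a chain map $F^{\bullet}\to E^{\bullet}[1]$ realizing the extension, whose mapping cone $E^{\bullet}\oplus F^{\bullet}$ (with the twisted differential) is an exact $\C$-coresolution of $X$, which a diagram chase shows is $\Hom_{\A}(-,\C)$-exact. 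The delicate point — and the reason I would avoid a naive term-by-term horseshoe — is that cosyzygies of objects of $\cores\widetilde{\C}$ need not lie in ${}^{\perp}\C$, so the successive lifting obstructions need not vanish on the nose; they do, however, get pushed into the next cosyzygy and die there (a composite of two consecutive coboundaries being zero), and the comparison argument is exactly what encodes this.

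Granting this, the two parts are short. For extensions: if $0\to A'\to A\to A''\to 0$ is exact with $A',A''\in r\mathcal{G}(\C)$, the long exact sequence with $\Ext^{\geq1}_{\A}(A',C)=0=\Ext^{\geq1}_{\A}(A'',C)$ gives $A\in{}^{\perp}\C$, while $A',A''\in\cores\widetilde{\C}$ and closure of $\cores\widetilde{\C}$ under extensions give $A\in\cores\widetilde{\C}$; so $A\in r\mathcal{G}(\C)$. For kernels of epimorphisms: let $0\to A'\xrightarrow{i}A\xrightarrow{p}A''\to 0$ be exact with $A,A''\in r\mathcal{G}(\C)$; the long exact sequence gives $A'\in{}^{\perp}\C$. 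Choose the first step $0\to A\xrightarrow{\alpha}C^{0}\to A_{1}\to 0$ of a $\Hom_{\A}(-,\C)$-exact $\C$-coresolution of $A$ (so $C^{0}\in\C$, $A_{1}\in\cores\widetilde{\C}$); then $\alpha i\colon A'\to C^{0}$ is monic, and with $B=\Coker(\alpha i)$ the sequence $0\to A'\xrightarrow{\alpha i}C^{0}\to B\to 0$ is $\Hom_{\A}(-,\C)$-exact, because any $A'\to C$ with $C\in\C$ extends over $i$ (as $\Ext^{1}_{\A}(A'',C)=0$) and hence over $\alpha$. The evident morphism of short exact sequences from $0\to A'\to C^{0}\to B\to 0$ to $0\to A\to C^{0}\to A_{1}\to 0$ (identity on $C^{0}$, $i$ on the left) yields by the snake lemma a short exact sequence $0\to A''\to B\to A_{1}\to 0$; since $A'',A_{1}\in\cores\widetilde{\C}$, closure under extensions gives $B\in\cores\widetilde{\C}$, and splicing a $\Hom_{\A}(-,\C)$-exact $\C$-coresolution of $B$ onto $0\to A'\to C^{0}\to B\to 0$ produces one for $A'$. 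Hence $A'\in r\mathcal{G}(\C)$.

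The main obstacle is precisely the closure of $\cores\widetilde{\C}$ under extensions, equivalently the mapping-cone/comparison argument behind it: unlike the classical Gorenstein-projective case, $r\mathcal{G}(\C)$ carries no resolution half, so its cosyzygies leave ${}^{\perp}\C$ and the term-by-term horseshoe breaks down past the first stage; what rescues the construction is the $\Hom_{\A}(-,\C)$-exactness built into $\cores\widetilde{\C}$, together with the vanishing $\Ext^{\geq1}_{\A}(A'',\C)=0$ for the quotient term, which both starts the construction and keeps the $\Hom_{\A}(-,\C)$-exactness bookkeeping consistent. It is also this asymmetry — the quotient, not the sub, of a conflation being the one forced into ${}^{\perp}\C$ — that underlies why $r\mathcal{G}(\C)$ is closed under kernels of epimorphisms while, as Example \ref{3.5} shows, it need not be closed under cokernels of monomorphisms.
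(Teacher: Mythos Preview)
Your central lemma --- that $\cores\widetilde{\C}$ is closed under \emph{arbitrary} extensions --- is false. Take $\A=\Mod k[x]/(x^{2})$ and $\C=\add k$ (the simple); every map $k[x]/(x^{2})\to k^{n}$ kills $x$, so $\cores\widetilde{\C}=\add k$, yet $0\to k\to k[x]/(x^{2})\to k\to 0$ is a non-split extension with both ends in $\cores\widetilde{\C}$ and middle term outside it. What \emph{is} true (and is what the references actually give) is closure under $\Hom_{\A}(-,\C)$-exact extensions. Your mapping-cone/comparison sketch needs precisely this hypothesis: to get the chain map $F^{\bullet}\to E^{\bullet}[1]$ you must first lift $X'\hookrightarrow E^{0}$ over $X'\hookrightarrow X$, and that lifting exists exactly when the sequence is $\Hom_{\A}(-,E^{0})$-exact. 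Relatedly, your dismissal of the ``naive horseshoe'' is misplaced: the paper does exactly the term-by-term horseshoe for (1), and the point you worry about (cosyzygies leaving ${}^{\perp}\C$) is harmless, because once the \emph{top} row is $\Hom_{\A}(-,\C)$-exact, a direct $3\times 3$ chase shows the cokernel row is again $\Hom_{\A}(-,\C)$-exact, so the induction continues without ever invoking ${}^{\perp}\C$ for the cosyzygies.

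The place where the gap actually bites is part (2). Your reduction produces $0\to A''\to B\to A_{1}\to 0$ with $A''\in r\mathcal{G}(\C)$ and $A_{1}\in\cores\widetilde{\C}$, and you then invoke the false general lemma. In fact this particular sequence \emph{is} $\Hom_{\A}(-,\C)$-exact --- applying $\Hom_{\A}(-,C)$ to your two parallel short exact sequences $0\to A'\to C^{0}\to B\to 0$ and $0\to A\to C^{0}\to A_{1}\to 0$ (both $\Hom_{\A}(-,\C)$-exact) and running the snake lemma gives $\Hom_{\A}(B,C)\twoheadrightarrow\Hom_{\A}(A'',C)$ --- so the repair is to state the lemma with the $\Hom_{\A}(-,\C)$-exactness hypothesis and add this verification. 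The paper sidesteps this entirely: it observes that $0\to A'\to A\to A''\to 0$ is $\Hom_{\A}(-,\C)$-exact (since $A''\in{}^{\perp}\C$) and then quotes \cite[Theorem 3.8]{Hu} to splice the coresolutions of $A$ and $A''$ directly into a $\Hom_{\A}(-,\C)$-exact $\C$-coresolution $0\to A'\to C_{1}^{0}\to C_{1}^{1}\oplus C_{2}^{0}\to\cdots$ of $A'$, a one-step mapping-cone rather than your two-step reduction. (A smaller point: ``split the coresolution'' does not prove closure of $\cores\widetilde{\C}$ under direct summands; this is nontrivial and is what \cite[Theorem 4.6(1)]{Hu}, cited by the paper, supplies.)
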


\begin{proof}
(1) Let
$$0\rightarrow L\rightarrow M\rightarrow N\rightarrow 0$$ be an exact sequence in $\mathscr{A}$
with $L,N\in r\mathcal{G}(\C)$. Then it is $\Hom_\A(-,\C)$-exact.
Since $L,N\in r\mathcal{G}(\C)$, then $M\in{^\bot\C}$ and there exist exact $\Hom_{\A}(-,\C)$-exact sequences
\begin{gather*}
    \xymatrix{
0\ar[r] &L \ar[r]^{{d^{'}}^0} & {C^{'}}^0 \ar[r]^{{d^{'}}^1} &{C^{'}}^1 \ar[r]
& \cdots}\ {\rm and} \ \
\xymatrix{
0 \ar[r] & N \ar[r]^{{d^{''}}^0} &{C^{''}}^0 \ar[r]^{{d^{''}}^1}
& {C^{''}}^1 \ar[r]  & \cdots}
\end{gather*}
in $\A$ with all ${C^{'}}^i$ and ${C^{''}}^i$ in $\C$. It is trivial that all cokernels of
${d^{'}}^i$ and ${d^{''}}^i$ are in $\cores\widetilde{\mathscr{C}}$.
Put $C^0:={{C^{'}}}^0\oplus {C^{''}}^0$ and consider the following diagram
$$\xymatrix@R=20pt@C=20pt{&0\ar[d]&&0\ar[d]&\\
0\ar[r]&L\ar[r]^{f}\ar[d]^{{d^{'}}^0}&M\ar[r]^{g}&N\ar[r]\ar[d]^{{d^{''}}^0}&0\\
0\ar[r]&{C^{'}}^0\ar[r]^{\!\!{1\choose 0}}&C^0\ar[r]^{\ (0\ 1)}&{C^{''}}^0\ar[r]&0.}
$$
Since the upper row in this diagram is $\Hom_\A(-,\C)$-exact, we get an epimorphism
$\mbox{Hom}_{\A}(M,{{C^{'}}}^0) \twoheadrightarrow \mbox{Hom}_{\A}(L,{{C^{'}}}^0)$ and
there exists $\alpha:M\rightarrow {{C^{'}}}^0$ such that ${d^{'}}^0=\alpha f$.
Putting $d^0:={\alpha \choose {{d^{''}}^0 \!g}}$,
then we obtain the following commutative diagram with exact columns and rows
$$
 \xymatrix@R=20pt@C=20pt{
 &0\ar[d]&0\ar@{-->}[d]&0\ar[d]&\\
 0\ar[r]&L\ar[r]^{f}\ar[d]^{{d'}^0}&M\ar@{.>}[ld]_\alpha\ar[r]^{g}\ar@{-->}[d]^{d^0}
 &N\ar[r]\ar[d]^{{d^{''}}^0}&0\\
 0\ar[r]&{C^{'}}^0\ar[r]^{\!\!{1\choose 0}}\ar[d]&C^0
 \ar[r]^{\ (0\ 1)}\ar@{-->}[d]&{C^{''}}^0\ar[r]\ar[d]&0\\
0\ar@{-->}[r]&\mbox{Coker}{d^{'}}^0\ar@{-->}[r]\ar[d]&\mbox{Coker}d^0\ar@{-->}[r]\ar@{-->}[d]
&\mbox{Coker}{d^{''}}^0\ar@{-->}[r]\ar[d]&0\\
&0&0&0.&}
$$
It is easy to check that the bottom row and the middle column
 in this diagram are $\mbox{Hom}_{\A}(-,\C)$-exact.
Repeating this process, we may get an exact $\mbox{Hom}_{\A}(-,\C)$-exact sequence
$$\xymatrix@C=0.5cm{
0 \ar[r] &M\ar[r] & C^0 \ar[r]^{} &C^1 \ar[r]^{} & \cdots}$$ in $\A$ with all $C^i$ in $\C$. Thus
$M\in\cores\widetilde{\mathscr{C}}$ and $M\in r\mathcal{G}(\C)$.

It is trivial that ${^\perp\mathscr{C}}$ is closed under direct summands. By \cite[Theorem 4.6(1)]{Hu},
$\cores\widetilde{\mathscr{C}}$ is closed under direct summands. Thus $r\mathcal{G}(\C)$
is also closed under direct summands.

It is easy to see that $r\mathcal{G}(\C)$ is closed under finite direct sums.

(2) Consider an exact sequence
$$0\to A\to A_1\to A_2\to 0$$ in $\A$ with $A_1,A_2\in r\mathcal{G}(\C)$. First, it is $\mbox{Hom}_{\A}(-,\C)$-exact and $A\in{^\perp\C}$. Moreover,
there exist  exact $\Hom_\A(-,\C)$-exact sequences
$$0\to  A_1\to C_1^0\to C_1^1\to\cdots\to C_1^n\to \cdots,$$
$$0\to A_2\to C_2^0\to C_2^1\to\cdots\to C_2^n\to \cdots$$
in $\A$ with all $C^i_m$ in $\C$. By \cite[Theorem 3.8(1)(5)]{Hu}, we get an exact  $\mbox{Hom}_{\A}(-,\C)$-exact sequence
$$0\to A\to C_1^0\to C_1^1\oplus C_2^0\to \cdots \to C_1^n\oplus C_1^{n-1}\to \cdots,$$
that is, $A\in\cores\widetilde{\mathscr{C}}$.
Thus $A\in r\mathcal{G}(\C)$, as desired.
\end{proof}

Sather-Wagstaff, Sharif and White \cite[Question 5.8]{SSW} posed  the following questions:
Is $\mathcal{G}(\mathscr{C})$ always exact? Is $\mathcal{G}(\mathscr{C})$ always closed under kernels
of epimorphisms or cokernels of monomorphisms? The following example answers these two questions
negatively.

\begin{example}\label{3.5}
Let $\Lambda=kQ$ be a finite-dimensional hereditary path algebra over an algebraically closed field $k$, where $Q$ is the quiver
$$1\leftarrow 2\leftarrow 3.$$ Then the Auslander-Reiten quiver of $\Lambda$ is as follows (\cite[Chapter IV, Example 4.10]{ASS}):
$$\xymatrix@R=5pt@C=10pt{
[S_1]\ar[dr]\ar@{.}[rr]&&[S_2]\ar@{.}[rr]\ar[dr]&&[S_3]\\
&[P_2]\ar@{.}[rr]\ar[ur]\ar[dr]&&[I_2]\ar[ur]&\\
&&[P_3],\ar[ur]&&
}$$
where the symbol $[M]$ denotes the isomorphism class of a module $M$, and
\begin{itemize}
  \item[(1)] $S_1=(k\leftarrow 0\leftarrow 0)$, $S_2=(0\leftarrow k\leftarrow 0)$ and $S_3=(0\leftarrow 0\leftarrow k)$ are all simple modules;
  \item[(2)] $P_1=S_1$, $P_2=(k\leftarrow k\leftarrow 0)$ and $P_3=(k\leftarrow k\leftarrow k)$ are all indecomposable projective modules;
  \item[(3)]  $I_1=P_3$, $I_2=(0\leftarrow k\leftarrow k)$ and $I_3=S_3$ are all indecomposable injective modules.
\end{itemize}
Let $\A$ be the category of finitely generated left $\Lambda$-modules.
\begin{itemize}
  \item[(i)] Let
$\mathscr{C}=\mbox{add}(S_1\oplus S_2\oplus P_3\oplus S_3)$ be the full subcategory of $\A$ consisting of
all direct summands of finite direct sum of $S_1\oplus S_2\oplus P_3\oplus S_3$. By computation,
$\mathcal{G}(\mathscr{C})=\mbox{add}(S_1\oplus S_2\oplus P_3\oplus S_3)$. From the exact sequence
$$0\ra S_1\ra P_2\ra S_2\ra 0,$$ we see that $\mathcal{G}(\mathscr{C})$ is not exact;
and from the following exact sequences
$$0\ra P_2\ra P_3\ra S_3\ra 0\ {\rm and} \ 0\ra S_1\ra P_3\ra I_2\ra 0,$$
we see that $\mathcal{G}(\mathscr{C})$ is neither closed under kernels of epimorphisms nor
closed under cokernels of monomorphisms.
\item[(ii)]Let
$\mathscr{C}=\mbox{add}(S_1\oplus P_3\oplus S_3)$. In this case, $\mathscr{C}\perp \mathscr{C}$.
To illustrate it, we only need to check  $\Ext_{\Lambda}^1(S_3,S_1)=0$. In fact, by \cite[Chapter IV,  Corollary 2.14]{ASS} we have
$$\Ext_{\Lambda}^1(S_3,S_1)\cong D\Hom_{\Lambda}(S_1,\tau S_3)\cong D\Hom_{\Lambda}(S_1,S_2)=0,$$ where $D:=\Hom_k(-,k)$. Moreover,  by computation,
$\mathcal{G}(\mathscr{C})=\mbox{add}(S_1\oplus P_3\oplus S_3)$. Thus the second and third short exact sequences in (i) show that
$\mathcal{G}(\mathscr{C})$ is still neither closed under kernels of epimorphisms nor
closed under cokernels of monomorphisms.
\end{itemize}
\end{example}

In the rest of this section, we always assume $\mathscr{C}\perp \mathscr{C}$. In this setting, $\C\subseteq r\mathcal{G}(\C)$.  We have the following lemma.

\begin{lemma}\label{3.3}
For an object $L\in\A$, $L\in r\mathcal{G}(\C)$ if and only if there exists an exact ($\Hom_\A(-,\C)$-exact) sequence
$$0\rightarrow L\rightarrow C\rightarrow N\rightarrow 0$$ in $\A$ with $C\in\C$ and $N\in r\mathcal{G}(\C)$.
\end{lemma}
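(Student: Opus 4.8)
The plan is to prove the two implications separately. The ``if'' direction is a routine splicing argument, and the ``only if'' direction reduces to showing that the first cosyzygy in a $\Hom_\A(-,\C)$-exact $\C$-coresolution of $L$ again lies in $r\mathcal{G}(\C)$.

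For the ``if'' direction, assume $0\to L\to C\to N\to 0$ is exact (and $\Hom_\A(-,\C)$-exact) with $C\in\C$ and $N\in r\mathcal{G}(\C)$; in particular $N\in{}^\perp\C$. Feeding this sequence into the long exact $\Ext_\A(-,C')$-sequence for $C'\in\C$ and using $\C\perp\C$ together with $\Ext^{\geq 1}_\A(N,C')=0$, I get $\Ext^{\geq 1}_\A(L,C')=0$; hence $L\in{}^\perp\C$, and the same sequence shows the given short exact sequence is automatically $\Hom_\A(-,\C)$-exact. Since $N\in\cores\widetilde{\C}$, I choose an exact $\Hom_\A(-,\C)$-exact sequence $0\to N\to C^0\to C^1\to\cdots$ with all $C^i\in\C$ and splice it with $0\to L\to C\to N\to 0$ to obtain an exact sequence $0\to L\to C\to C^0\to C^1\to\cdots$. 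Because $C\to C^0$ factors as $C\twoheadrightarrow N\hookrightarrow C^0$, a routine diagram chase (using left exactness of $\Hom_\A(-,C')$ and the $\Hom_\A(-,\C)$-exactness of the two pieces) shows the spliced sequence is again $\Hom_\A(-,\C)$-exact, so $L\in\cores\widetilde{\C}$ and $L\in r\mathcal{G}(\C)$.

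For the ``only if'' direction, let $L\in r\mathcal{G}(\C)$ and choose an exact $\Hom_\A(-,\C)$-exact sequence $0\to L\xrightarrow{d^0}C^0\xrightarrow{d^1}C^1\to\cdots$ with all $C^i\in\C$. Set $N:=\Coker d^0=\Im d^1$; then I have the exact sequences $0\to L\to C^0\to N\to 0$ and $0\to N\to C^1\to C^2\to\cdots$. Since the original coresolution is $\Hom_\A(-,\C)$-exact, the map $\Hom_\A(C^0,C')\to\Hom_\A(L,C')$ is surjective, so the first of these short sequences is $\Hom_\A(-,\C)$-exact; factoring $d^1$ through $N$ and running essentially the same chase as above shows that the second one is $\Hom_\A(-,\C)$-exact too, whence $N\in\cores\widetilde{\C}$. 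Applying the long exact $\Ext_\A(-,C')$-sequence to $0\to L\to C^0\to N\to 0$ and using $L\in{}^\perp\C$, $\C\perp\C$, and the surjectivity just noted, I get $\Ext^{\geq 1}_\A(N,C')=0$ for all $C'\in\C$, i.e. $N\in{}^\perp\C$. Hence $N\in r\mathcal{G}(\C)$, and $0\to L\to C^0\to N\to 0$ is the desired sequence.

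The only mildly technical step is the diagram chase verifying that truncating (equivalently, splicing) a $\Hom_\A(-,\C)$-exact $\C$-coresolution preserves $\Hom_\A(-,\C)$-exactness of the pieces; this is elementary, and the relevant closure behaviour of $\cores\widetilde{\C}$ is also implicit in the results of \cite{Hu} used elsewhere in this section. Everything else is a mechanical application of the long exact $\Ext_\A$-sequence and the standing hypothesis $\C\perp\C$.
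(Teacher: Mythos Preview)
Your proof is correct, and the ``only if'' direction is essentially identical to the paper's: take the first step of a $\Hom_\A(-,\C)$-exact $\C$-coresolution of $L$, observe that the cokernel $N$ lies in $\cores\widetilde{\C}$, and use $\C\perp\C$ (plus $L\in{}^\perp\C$) to get $N\in{}^\perp\C$.

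The only difference is in the ``if'' direction. The paper disposes of it in one line by invoking Proposition~\ref{extension-closed}(2): under the standing hypothesis $\C\perp\C$ one has $\C\subseteq r\mathcal{G}(\C)$, so in the given sequence both $C$ and $N$ lie in $r\mathcal{G}(\C)$, and closure of $r\mathcal{G}(\C)$ under kernels of epimorphisms yields $L\in r\mathcal{G}(\C)$. You instead give a direct, self-contained argument (long exact sequence for ${}^\perp\C$, then splicing for $\cores\widetilde{\C}$). Your route avoids relying on Proposition~\ref{extension-closed} and the result from \cite{Hu} used there, at the cost of redoing a small piece of that work; the paper's route is more economical given what has already been established.
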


\begin{proof}

The necessity follows from Proposition \ref{extension-closed}(2).

For the sufficiency, since $L\in r\mathcal{G}(\C)$, there exists an exact $\Hom_{\A}(-,\mathscr{C})$-exact sequence
$$0\rightarrow L\rightarrow C\rightarrow N\rightarrow 0$$
in $\A$ with $C\in\C$ and $N\in \cores\widetilde{\mathscr{C}}$. Since $\mathscr{C}\perp \mathscr{C}$,
we have $N\in{^\perp\mathscr{C}}$ and thus $N\in r\mathcal{G}(\C)$.
\end{proof}

\begin{proposition}\label{3.4}
If $$0\rightarrow L\rightarrow M\rightarrow N\rightarrow 0$$
is an exact sequence in $\mathscr{A}$ with $L,M\in r\mathcal{G}(\C)$, then
$N\in r\mathcal{G}(\C)$ if and only if $N\in{^{\perp_1}\C}$.
\end{proposition}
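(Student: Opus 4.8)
The plan is to prove both implications, the nontrivial one being sufficiency. Necessity is immediate: if $N\in r\mathcal{G}(\C)$ then in particular $N\in {^\perp\C}\subseteq {^{\perp_1}\C}$. So assume $N\in {^{\perp_1}\C}$; I must show $N\in r\mathcal{G}(\C) = {^\perp\C}\cap\cores\widetilde{\C}$. First I would establish that $N\in{^\perp\C}$. Applying $\Hom_\A(-,C)$ for $C\in\C$ to the short exact sequence $0\to L\to M\to N\to 0$ gives a long exact sequence; since $L,M\in{^\perp\C}$ we get $\Ext^{i}_\A(N,C)\cong\Ext^{i-1}_\A(L,C)=0$ for all $i\geq 2$, and $\Ext^1_\A(N,C)=0$ by the hypothesis $N\in{^{\perp_1}\C}$. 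Hence $\Ext^{\geq1}_\A(N,C)=0$ for all $C\in\C$, i.e. $N\in{^\perp\C}$; note this also makes the given sequence $\Hom_\A(-,\C)$-exact.

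Next I would produce a $\Hom_\A(-,\C)$-exact coresolution of $N$ by objects of $\C$. Since $L\in r\mathcal{G}(\C)$, by Lemma \ref{3.3} (or directly from $L\in\cores\widetilde{\C}$) there is an exact $\Hom_\A(-,\C)$-exact sequence $0\to L\to C^0\to L'\to 0$ with $C^0\in\C$ and $L'\in r\mathcal{G}(\C)$. I would then form the pushout of $L\to M$ along $L\to C^0$, obtaining a commutative diagram with exact rows and columns
$$\xymatrix@R=18pt@C=20pt{
&0\ar[d]&0\ar[d]&&\\
0\ar[r]&L\ar[r]\ar[d]&M\ar[r]\ar[d]&N\ar@{=}[d]\ar[r]&0\\
0\ar[r]&C^0\ar[r]\ar[d]&P\ar[r]\ar[d]&N\ar[r]&0\\
&L'\ar[r]^{\cong}\ar[d]&L'\ar[d]&&\\
&0&0&&}$$
From the middle column $0\to M\to P\to L'\to 0$ with $M,L'\in r\mathcal{G}(\C)$, Proposition \ref{extension-closed}(1) gives $P\in r\mathcal{G}(\C)$. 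The middle column is $\Hom_\A(-,\C)$-exact because $L'\in{^\perp\C}$, and standard diagram-chasing shows the middle row $0\to C^0\to P\to N\to 0$ is then also $\Hom_\A(-,\C)$-exact. So I have realized $N$ as the cokernel of a $\Hom_\A(-,\C)$-exact monomorphism $C^0\rightarrowtail P$ with $C^0\in\C$ and $P\in r\mathcal{G}(\C)$; in other words $0\to N\to (\text{cosyzygy of }N)\to\cdots$ begins a $\Hom_\A(-,\C)$-exact $\C$-coresolution once we iterate, using that $P\in\cores\widetilde{\C}$ to continue. Splicing the $\C$-coresolution of $P$ onto $0\to N\to P/\text{im}\,C^0\cdots$ — more precisely, using the short exact sequence $0\to C^0\to P\to N\to 0$ to compare coresolutions in the usual horseshoe-type fashion — yields an exact $\Hom_\A(-,\C)$-exact sequence $0\to N\to D^0\to D^1\to\cdots$ with all $D^i\in\C$, so $N\in\cores\widetilde{\C}$.

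The main obstacle is the bookkeeping around $\Hom_\A(-,\C)$-exactness: one must check at each stage that the sequences produced by pushout and by splicing remain $\Hom_\A(-,\C)$-exact, not merely exact. This is where the hypothesis $N\in{^{\perp_1}\C}$ does real work — it is exactly what upgrades the original sequence to a $\Hom_\A(-,\C)$-exact one and forces the cosyzygies to lie in ${^\perp\C}$, which in turn keeps all the auxiliary columns $\Hom_\A(-,\C)$-exact. I expect the cleanest write-up to invoke \cite[Theorem 3.8]{Hu} (as in the proof of Proposition \ref{extension-closed}(2)) to handle the construction of the coresolution and its $\Hom_\A(-,\C)$-exactness in one stroke, rather than iterating pushouts by hand. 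Combining $N\in{^\perp\C}$ with $N\in\cores\widetilde{\C}$ gives $N\in r\mathcal{G}(\C)$, completing the proof.
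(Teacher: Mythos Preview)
Your setup is essentially identical to the paper's: you form the same pushout and obtain the short exact sequence $0\to C^0\to P\to N\to 0$ with $C^0\in\C$ and $P\in r\mathcal{G}(\C)$ (the paper writes $T$ for your $P$). But at this point you miss the decisive observation and instead head into a construction that does not work as stated.

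The paper simply notes that since $C^0\in\C$ and $N\in{^{\perp_1}\C}$, we have $\Ext^1_\A(N,C^0)=0$, so the middle row $0\to C^0\to P\to N\to 0$ \emph{splits}. Hence $N$ is a direct summand of $P\in r\mathcal{G}(\C)$, and Proposition~\ref{extension-closed}(1) finishes the proof. Your preliminary verification that $N\in{^\perp\C}$ is correct but unnecessary once you use the splitting.

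Your alternative route---``splicing'' a $\C$-coresolution of $P$ onto something to coresolve $N$, or invoking a ``horseshoe-type'' comparison---is where the gap lies. In the sequence $0\to C^0\to P\to N\to 0$ the object $N$ is the \emph{cokernel}; the horseshoe lemma builds a (co)resolution of the middle term from those of the two outer terms, not a coresolution of the third term from the first two. Likewise, the result from \cite{Hu} used in Proposition~\ref{extension-closed}(2) produces a coresolution of the \emph{kernel} of a $\Hom_\A(-,\C)$-exact epimorphism, which is the wrong direction here. Concretely, if you try to push the coresolution $0\to P\to D^0\to\cdots$ down to $N$, the first term you get is $D^0/C^0$, and there is no reason for that quotient to lie in $\C$. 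So the ``splicing'' step does not yield a $\C$-coresolution of $N$ without already knowing the sequence splits---which is exactly the point you should use.
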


\begin{proof}

The necessity is trivial. In the following we prove the sufficiency.
Let $$0\rightarrow L\rightarrow M\rightarrow N\rightarrow 0$$
be an exact sequence in $\mathscr{A}$ with $L,M\in r\mathcal{G}(\C)$ and
$\Ext^1_{\mathscr{A}}(N,C)=0$ for any $C\in \mathscr{C}$.
Since $L\in r\mathcal{G}(\C)$, by Lemma \ref{3.3} there exists an exact sequence
$$0\rightarrow L\rightarrow C\rightarrow L^{'}\rightarrow0$$ in $\A$ with $C\in\C$ and
$L^{'}\in r\mathcal{G}(\C)$. Consider the following push-out diagram
$$\xymatrix@R=20pt@C=20pt{
&0\ar[d]&0\ar@{-->}[d]&&\\
0\ar[r]&L\ar[r]\ar[d]&M\ar[r]\ar@{-->}[d]&N\ar@{==}[d]\ar[r]&0\\
0\ar@{-->}[r]&C\ar@{-->}[r]\ar[d]&T\ar@{-->}[r]\ar@{-->}[d]&N\ar@{-->}[r]&0\\
&L^{'}\ar@{==}[r]\ar[d]&L^{'}\ar@{-->}[d]&&\\
&0&0.&& }$$
By Proposition \ref{extension-closed}(1),  the middle column in this diagram shows that $T\in r\mathcal{G}(\C)$.
Since $\Ext^1_{\A}(N,C)=0$ by assumption, the middle row in the above diagram splits. Thus $N$ is isomorphic to a direct summand of $T$,
and hence it is in $r\mathcal{G}(\C)$ by  Proposition \ref{extension-closed}(1).
\end{proof}

In the following, we give some equivalent conditions for objects in $r\mathcal{G}(\C)$.

\begin{theorem}\label{3.6}
For any $M\in\A$, the following statements are equivalent.
\begin{enumerate}
\item[(1)] $M\in r\mathcal{G}(\C)$.
\item[(2)] $M\in {^\perp\mathscr{C}}$ and for any subcategory $\mathscr{D}$ of $\mathscr{A}$ such that
$\C$ is an injective cogenerator for $\mathscr{D}$, there exists an exact $\Hom_{\A}(-,\C)$-exact sequence
$$0\ra M\ra D^0 \rightarrow D^1\rightarrow \cdots$$ in $\A$ with all $D^i$ in $\mathscr{D}$.
\item[(3)] There exists an exact $\Hom_{\A}(-,\C)$-exact sequence $${\mathbb{G}}:=\xymatrix@C=0.5cm{
  \cdots \ar[r] & G_1 \ar[r]^{} & G_0 \ar[r]^{} & G_{-1} \ar[r]^{} & G_{-2} \ar[r] & \cdots }$$
in $\A$ with all $G_i$ in $r\mathcal{G}(\C)$ such that
$M\cong\operatorname{Ker}(G_{-1}\rightarrow G_{-2})$.
\item[(4)] For any subcategory $\mathscr{D}$ of $\mathscr{A}$ with
$\mathscr{C}\subseteq\mathscr{D}\subseteq r\mathcal{G}(\C)$,
there exists an exact $\Hom_{\A}(-,\mathscr{D})$-exact sequence $${\mathbb{G}}:=\xymatrix@C=0.5cm{
  \cdots \ar[r] & G_1 \ar[r]^{} & G_0 \ar[r]^{} & G_{-1} \ar[r]^{} & G_{-2} \ar[r] & \cdots }$$
in $\A$ with all $G_i$ in $r\mathcal{G}(\C)$ such that
$M\cong\operatorname{Ker}(G_{-1}\rightarrow G_{-2})$.
\item[(5)] There exists an exact $\Hom_{\A}(-,r\mathcal{G}(\C))$-exact
sequence $${\mathbb{G}}:=\xymatrix@C=0.5cm{
  \cdots \ar[r] & G_1 \ar[r]^{} & G_0 \ar[r]^{} & G_{-1} \ar[r]^{} & G_{-2} \ar[r] & \cdots }$$
in $\A$ with all $G_i$ in $r\mathcal{G}(\C)$ such that
$M\cong\operatorname{Ker}(G_{-1}\rightarrow G_{-2})$.
\end{enumerate}
\end{theorem}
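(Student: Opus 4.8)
The plan is to establish the five equivalences through the cycle $(1)\Rightarrow(2)\Rightarrow(1)$ together with $(1)\Rightarrow(4)\Rightarrow(5)\Rightarrow(3)\Rightarrow(1)$; all of these are formal except $(3)\Rightarrow(1)$, in which the actual work lies. For $(1)\Rightarrow(2)$, note that any $\mathscr{D}$ admitting $\C$ as an injective cogenerator contains $\C$, so the exact $\Hom_\A(-,\C)$-exact $\C$-coresolution of $M$ guaranteed by $M\in\cores\widetilde{\C}$ is already a coresolution of the kind demanded by $(2)$. For $(2)\Rightarrow(1)$ the key point is that $\C$ is an injective cogenerator for $\C$ itself: it is trivially a cogenerator for $\C$ (use $0\to C\xrightarrow{\id}C\to 0$ for $C\in\C$) and it is injective for $\C$ because $\C\perp\C$; hence $(2)$ applied with $\mathscr{D}=\C$ yields an exact $\Hom_\A(-,\C)$-exact $\C$-coresolution of $M$, i.e. $M\in\cores\widetilde{\C}$, which with $M\in{}^\perp\C$ gives $M\in r\mathcal{G}(\C)$. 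Finally $(1)\Rightarrow(4)$, $(4)\Rightarrow(5)$, $(5)\Rightarrow(3)$ are immediate: for the first, the complex $\mathbb{G}:=(\cdots\to 0\to M\xrightarrow{\id}M\to 0\to\cdots)$ has all terms in $r\mathcal{G}(\C)$, is exact, is $\Hom_\A(-,\mathscr{X})$-exact for \emph{every} subcategory $\mathscr{X}$ of $\A$, and presents $M$ as a kernel of one of its differentials; the second is the case $\mathscr{D}=r\mathcal{G}(\C)$ of $(4)$; the third uses $\C\subseteq r\mathcal{G}(\C)$.

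It remains to prove $(3)\Rightarrow(1)$. So let $\mathbb{G}:\cdots\to G_1\to G_0\to G_{-1}\to G_{-2}\to\cdots$ be exact and $\Hom_\A(-,\C)$-exact with all $G_i\in r\mathcal{G}(\C)$ and $M\cong\Ker(G_{-1}\to G_{-2})$. Decomposing $\mathbb{G}$ into short exact sequences, each of which is again $\Hom_\A(-,\C)$-exact, one obtains in particular a $\Hom_\A(-,\C)$-exact sequence $0\to N\to G_0\to M\to 0$ with $N:=\Im(G_1\to G_0)$, and $N$ has the same shape as $M$. The first step is to show $M\in{}^\perp\C$. For $C\in\C$, apply $\Hom_\A(-,C)$ to this short exact sequence: since $\Hom_\A(G_0,C)\to\Hom_\A(N,C)$ is onto (by $\Hom_\A(-,\C)$-exactness) and $\Ext^1_\A(G_0,C)=0$ (because $G_0\in r\mathcal{G}(\C)\subseteq{}^\perp\C$), the connecting homomorphism $\Hom_\A(N,C)\to\Ext^1_\A(M,C)$ is both onto and zero, so $\Ext^1_\A(M,C)=0$. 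Running the same argument over the left syzygies of $\mathbb{G}$ and shifting dimensions gives $\Ext^{\geq 1}_\A(M,C)=0$; the identical argument shows that \emph{every} (co)syzygy of $\mathbb{G}$, and in particular every cosyzygy of its right half, lies in ${}^\perp\C$.

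The remaining — and more substantial — step is to prove $M\in\cores\widetilde{\C}$, for which we may now use the right half of $\mathbb{G}$: the exact $\Hom_\A(-,\C)$-exact sequence $0\to M\to G_{-1}\to G_{-2}\to\cdots$ with all terms in $r\mathcal{G}(\C)\subseteq\cores\widetilde{\C}$. I would construct a genuine $\C$-coresolution of $M$ by the classical ``staircase'' push-out procedure, maintaining the inductive invariant that each partial cokernel $K_j$ (with $K_0=M$) lies in ${}^\perp\C$ and carries an exact $\Hom_\A(-,\C)$-exact coresolution by objects of $r\mathcal{G}(\C)$. Given such a $K_j$ with coresolution $0\to K_j\to G^0_j\to G^1_j\to\cdots$, Lemma \ref{3.3} embeds $G^0_j$ in some $C^j\in\C$ through a $\Hom_\A(-,\C)$-exact sequence with cokernel in $r\mathcal{G}(\C)$; the push-out along $K_j\hookrightarrow G^0_j$ then yields a $\Hom_\A(-,\C)$-exact sequence $0\to K_j\to C^j\to K_{j+1}\to 0$ with $C^j\in\C$, in which $K_{j+1}$ is an extension of an object of $r\mathcal{G}(\C)$ by the first cosyzygy of the coresolution of $K_j$. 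A second push-out against the rest of that coresolution, together with the closure of $r\mathcal{G}(\C)$ under extensions (Proposition \ref{extension-closed}(1)), equips $K_{j+1}$ with its own exact $\Hom_\A(-,\C)$-exact $r\mathcal{G}(\C)$-coresolution, while $K_{j+1}\in{}^\perp\C$ follows from $0\to K_j\to C^j\to K_{j+1}\to 0$ exactly as in the previous paragraph. Splicing the sequences $0\to K_j\to C^j\to K_{j+1}\to 0$ produces an exact $\Hom_\A(-,\C)$-exact $\C$-coresolution of $M$, so $M\in\cores\widetilde{\C}$ and hence $M\in r\mathcal{G}(\C)$. (Equivalently, one may first isolate the lemma that an object of ${}^\perp\C$ with an exact $\Hom_\A(-,\C)$-exact coresolution by objects of $\cores\widetilde{\C}$ again lies in $\cores\widetilde{\C}$ — proved by this same staircase, or read off from \cite{Hu} — after which $(3)\Rightarrow(1)$ is immediate from the first step.)

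The main obstacle is this final construction: the push-out bookkeeping that simultaneously keeps each $K_j$ inside ${}^\perp\C$ and keeps it supplied with a $\Hom_\A(-,\C)$-exact $r\mathcal{G}(\C)$-coresolution, so that the staircase can be iterated indefinitely and delivers an honest $\C$-coresolution of $M$ rather than a finite truncation. The other ingredients — dimension shifting, Lemma \ref{3.3}, and the closure properties in Proposition \ref{extension-closed} — are routine.
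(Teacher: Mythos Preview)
Your proposal is correct and, for the substantive step $(3)\Rightarrow(1)$, follows exactly the paper's argument: show all (co)syzygies of $\mathbb{G}$ lie in ${}^\perp\C$, then iterate the two push-outs (Lemma~\ref{3.3} to embed the first term into $\C$, then a second push-out to re-equip the new cokernel with an $r\mathcal{G}(\C)$-coresolution) to build a genuine $\C$-coresolution of $M$. Your $K_{j+1}$, $T$ are the paper's $Q_{-1}$, $T_{-2}$.

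One small difference worth noting: for $(2)\Rightarrow(1)$ the paper runs a push-out staircase converting an arbitrary $\mathscr{D}$-coresolution (for any $\mathscr{D}$ with $\C$ as injective cogenerator) into a $\C$-coresolution, thereby proving the a priori stronger statement that a single such $\mathscr{D}$ suffices. Your shortcut---specialize the universal hypothesis in $(2)$ to $\mathscr{D}=\C$, using that $\C$ is an injective cogenerator for itself since $\C\perp\C$---is perfectly valid for the theorem as stated and saves that entire construction.
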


\begin{proof}
The implications $(1)\Rightarrow (2)$ and $(5)\Rightarrow(4)\Rightarrow(3)$ are trivial.

$(2)\Rightarrow (1)$ Let $M\in {^\perp\mathscr{C}}$ and
$$0\ra M\ra D^0 \rightarrow D^1\rightarrow \cdots$$
be an exact $\Hom_{\A}(-,\C)$-exact sequence in $\A$ with all $D^i$ in $\mathscr{D}$. Put $M^1:=\Im(D^0 \rightarrow D^1)$.
Since $\C$ is an injective cogenerator for $\mathscr{D}$ by assumption, $M^1\in{^\perp\mathscr{C}}$ and
there exists an exact sequence
$$0\ra D^0\ra C^0\ra {D^{'}}\ra 0$$ in $\A$ with $C^0\in \C$ and $D^{'}\in \mathscr{D}$.
Consider the following push-out diagram:
$$\xymatrix@R=20pt@C=20pt{& & 0 \ar[d] & 0 \ar@{-->}[d]& &\\
0 \ar[r] & M \ar@{==}[d] \ar[r] & D^0 \ar[d] \ar[r] &M^1 \ar@{-->}[d] \ar[r] & 0\\
0 \ar@{-->}[r] & M \ar@{-->}[r] & C^0 \ar@{-->}[r] \ar[d] & M^{'} \ar@{-->}[d] \ar@{-->}[r] & 0 &\\
& & D^{'} \ar@{==}[r] \ar[d] & D^{'} \ar@{-->}[d]& &\\
& & 0 & 0. & & }$$
Since ${D^{'}},M^1\in {^{\perp}\mathscr{C}}$, we have that $M^{'}\in{^{\perp}\mathscr{C}}$ and therefore the middle row
 is $\Hom_{\A}(-,\C)$-exact. Similarly, we get an exact $\Hom_{\A}(-,\C)$-exact sequence
$$0\rightarrow M^{'}\rightarrow C^1\rightarrow M^{''}\rightarrow 0$$ in $\A$ with $M^{''}\in {^{\perp}\mathscr{C}}$.
Continuing this process, we obtain an exact $\Hom_{\A}(-,\C)$-exact sequence
$$0\ra M \rightarrow C^0\rightarrow C^1\rightarrow  \cdots$$
in $\A$ with all $C^i$ in $\C$, as desired.

$(1)\Rightarrow (5)$ It holds by setting
$${\mathbb{G}}:=\xymatrix@C=0.5cm{
 \cdots \ar[r] & 0 \ar[r]^{} & M \ar[r]^{\operatorname{id}} & M \ar[r]^{} & 0 \ar[r] & \cdots }.$$

$(3)\Rightarrow (1)$ Let
$${\mathbb{G}}:=\xymatrix@C=0.5cm{
  \cdots \ar[r] & G_1 \ar[r]^{} & G_0 \ar[r]^{} & G_{-1} \ar[r]^{} & G_{-2} \ar[r] & \cdots }$$
be an exact $\Hom_{\A}(-,\C)$-exact sequence in $\A$ with all $G_i$ in $r\mathcal{G}(\C)$
and $M\cong\mbox{Ker}(G_{-1}\rightarrow G_{-2})$. It is easy to see that $M_{-i}\in{^{\perp}\C}$,
where $M_{-i}:=\mbox{Ker}(G_{-(i+1)}\rightarrow G_{-(i+2)})$ for any $i\geq 0$ (note: $M_0=M$).

Since $G_{-1}\in r\mathcal{G}(\C)$, by Lemma \ref{3.3} there exists an exact $\Hom_{\A}(-,\C)$-exact sequence
$$0\rightarrow G_{-1}\rightarrow C_{-1}\rightarrow G_{-1}^{'}\rightarrow 0$$ in $\A$ with
$C_{-1}\in {\C}$ and $G_{-1}^{'}\in r\mathcal{G}(\C)$. Consider the following push-out diagram
 $$\xymatrix@R=20pt@C=20pt{& & 0 \ar[d] & 0 \ar@{-->}[d]& &\\
0 \ar[r] & M \ar@{==}[d] \ar[r] & G_{-1} \ar[d] \ar[r] &M_{-1} \ar@{-->}[d] \ar[r] & 0\\
0 \ar@{-->}[r] & M \ar@{-->}[r] & C_{-1} \ar@{-->}[r] \ar[d] & Q_{-1} \ar@{-->}[d] \ar@{-->}[r] & 0 &\\
& & G_{-1}^{'} \ar@{==}[r] \ar[d] & G_{-1}^{'} \ar@{-->}[d]& &\\
& & 0 & 0. & & }$$
Then $Q_{-1}\in{^{\perp}\C}$ and the middle row in this diagram is $\Hom_{\A}(-,\C)$-exact.
Now consider the following push-out diagram
$$\xymatrix@R=20pt@C=20pt{
&0\ar[d]&0\ar@{-->}[d]&&\\
0\ar[r]&M_{-1}\ar[r]\ar[d]&Q_{-1}\ar[r]\ar@{-->}[d]&G_{-1}^{'}\ar@{==}[d]\ar[r]&0\\
0\ar@{-->}[r]&G_{-2}\ar@{-->}[r]\ar[d]&T_{-2}\ar@{-->}[r]\ar@{-->}[d]&G_{-1}^{'}\ar@{-->}[r]&0\\
&M_{-2}\ar@{==}[r]\ar[d]&M_{-2}\ar@{-->}[d]&&\\
&0&0.&& }$$
Since $G_{-1}^{'},G_{-2}\in r\mathcal{G}(\C)$, we have
$T_{-2}\in r\mathcal{G}(\C)$ by Proposition \ref{extension-closed}(1).
Since $M_{-2}\in{^{\perp}\C}$, the middle column in the above diagram is $\Hom_{\A}(-,\C)$-exact
and we get an exact $\Hom_{\A}(-,\C)$-exact sequence
$$0\rightarrow Q_{-1}\rightarrow T_{-2}\rightarrow G_{-3}\rightarrow G_{-4}\rightarrow \cdots$$
in $\A$.  We repeat the argument by replacing $M$ with $Q_{-1}$ to get an exact $\Hom_{\A}(-,\C)$-exact sequence
$$0\rightarrow Q_{-1}\rightarrow C_{-2}\rightarrow Q_{-2}\rightarrow 0$$ in $\A$ with $C_{-2}\in\C$.
Continuing this process, we obtain an exact $\Hom_{\A}(-,\C)$-exact sequence
$$0 \rightarrow M\rightarrow C_{-1}\rightarrow C_{-2}\rightarrow \cdots$$
in $\A$ with all $C_i$ in $\C$. It implies $M\in r\mathcal{G}(\C)$.
\end{proof}

We write $r\mathcal{G}^2(\C)$ to be a subcategory of $\A$ consists of all $A\in \A$ such that there exists an exact $\Hom_\A(-,\C)$-exact sequence
$$\cdots \to G_{1} \to G_{0} \to G^{0} \to G^{1} \to \cdots$$
in $\A$ with all $G_{i},G^{i}$ in $r\mathcal{G}(\C)$ and
$A \cong \Im (G_{0} \to G^{0})$.
The following result is an immediate consequence of Theorem \ref{3.6},
which generalizes \cite[Theorem 2.9]{LHX13}.

\begin{corollary}\label{3.7}
$r\mathcal{G}^2(\C)=r\mathcal{G}(\C)$.
\end{corollary}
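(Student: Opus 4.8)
The plan is to deduce the statement directly from Theorem \ref{3.6}, the point being that the defining condition for $r\mathcal{G}^2(\C)$ is, after a harmless reindexing of the complex, literally condition (3) of that theorem; so the equivalence $(1)\Leftrightarrow(3)$ there does all the work. I will prove the two inclusions separately. Recall that throughout this part of the section the standing hypothesis $\C\perp\C$ is in force, so that $\C\subseteq r\mathcal{G}(\C)$ and Theorem \ref{3.6} applies.

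For $r\mathcal{G}(\C)\subseteq r\mathcal{G}^2(\C)$: given $M\in r\mathcal{G}(\C)$, the complex
$$\cdots\longrightarrow 0\longrightarrow M\longrightarrow M\longrightarrow 0\longrightarrow\cdots,$$
whose unique nonzero differential is $\id_M$ and with the two copies of $M$ placed in the positions $G_0$ and $G^0$, is exact, is $\Hom_\A(-,\C)$-exact (trivially), has all its terms in $r\mathcal{G}(\C)$, and satisfies $M\cong\Im(G_0\to G^0)$. Hence $M\in r\mathcal{G}^2(\C)$.

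For $r\mathcal{G}^2(\C)\subseteq r\mathcal{G}(\C)$: let $A\in r\mathcal{G}^2(\C)$, witnessed by an exact $\Hom_\A(-,\C)$-exact complex $\cdots\to G_1\to G_0\to G^0\to G^1\to\cdots$ with all terms in $r\mathcal{G}(\C)$ and $A\cong\Im(G_0\to G^0)$. Reindex this complex by renaming each $G^i$ as $G_{-(i+1)}$; it then reads $\cdots\to G_1\to G_0\to G_{-1}\to G_{-2}\to\cdots$, it is still exact and $\Hom_\A(-,\C)$-exact, all its terms lie in $r\mathcal{G}(\C)$, and exactness at the spot $G_{-1}$ gives $\Im(G_0\to G_{-1})=\Ker(G_{-1}\to G_{-2})$, so that $A\cong\Ker(G_{-1}\to G_{-2})$. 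This is exactly the situation in Theorem \ref{3.6}(3); applying the implication $(3)\Rightarrow(1)$ of that theorem yields $A\in r\mathcal{G}(\C)$.

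There is essentially no obstacle here: the only things to check are bookkeeping — that reindexing a complex preserves exactness and $\Hom_\A(-,\C)$-exactness (both are pointwise conditions), and that the $\Im$ in the definition of $r\mathcal{G}^2(\C)$ matches, by exactness, the $\Ker$ in Theorem \ref{3.6}(3). So the corollary is really just the observation that $r\mathcal{G}^2(\C)$ has been defined by restating condition (3) of Theorem \ref{3.6}.
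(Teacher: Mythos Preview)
Your proof is correct and follows exactly the paper's approach: the paper simply states that the corollary is an immediate consequence of Theorem \ref{3.6}, and you have spelled out precisely how that deduction goes (the trivial complex for one inclusion, and the reindexing plus $(3)\Rightarrow(1)$ for the other).
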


The following result shows that under some conditions,
objects in $r\mathcal{G}(\C)$ possess the symmetry just as objects in $\mathcal{G}(\C)$ do.

\begin{corollary}\label{3.8}
 If $r\mathcal{G}(\C)$ has a projective generator $\mathscr{P}$,
then the following statements are equivalent for any $G\in\A$.
\begin{enumerate}
\item[(1)] $G\in r\mathcal{G}(\C)$.
\item[(2)] There exists an exact $\Hom_\A(-,\C)$-exact sequence
$$\cdots \to P_1\to P_0 \to C^0\to C^1\to \cdots$$
in $\A$ with all $P_i$ in $\mathscr{P}$ and $C^i$ in $\mathscr{C}$ such that $G\cong\Im(P_0 \to C^0)$.
\item[(3)] There exists an exact $\Hom_R(-,\C)$-exact sequence
$$\cdots \to W_1\to W_0\to W^0 \to W^1\to \cdots$$
in $\A$ with all $W_i, W^i\in \C\cup\mathscr{P}$ and $G\cong \Im(W_0\to W^0)$.
\end{enumerate}
\end{corollary}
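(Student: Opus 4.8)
The plan is to prove the cycle of implications $(1)\Rightarrow(2)\Rightarrow(3)\Rightarrow(1)$. The step $(2)\Rightarrow(3)$ requires nothing: since $\mathscr{C}\perp\mathscr{C}$ we have $\mathscr{C}\subseteq r\mathcal{G}(\mathscr{C})$, and the hypothesis that $\mathscr{P}$ is a projective generator for $r\mathcal{G}(\mathscr{C})$ already includes $\mathscr{P}\subseteq r\mathcal{G}(\mathscr{C})$; hence any sequence as in (2) is of the form required in (3), with $W_i:=P_i$ and $W^i:=C^i$.

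For $(1)\Rightarrow(2)$ I would assemble the desired doubly infinite $\Hom_\A(-,\mathscr{C})$-exact complex by splicing a coresolution half and a resolution half at $G$. Since $G\in r\mathcal{G}(\mathscr{C})\subseteq\cores\widetilde{\mathscr{C}}$, the definition supplies an exact $\Hom_\A(-,\mathscr{C})$-exact sequence $0\to G\to C^0\to C^1\to\cdots$ with all $C^i\in\mathscr{C}$; this is the coresolution half. For the resolution half, I would iterate the generator property of $\mathscr{P}$: since $G\in r\mathcal{G}(\mathscr{C})$, there is a short exact sequence $0\to G_1\to P_0\to G\to 0$ with $P_0\in\mathscr{P}$ and $G_1\in r\mathcal{G}(\mathscr{C})$; applying this to $G_1$, then to the next syzygy, and so on yields an exact sequence $\cdots\to P_1\to P_0\to G\to 0$ with all $P_i\in\mathscr{P}$ and all syzygies in $r\mathcal{G}(\mathscr{C})\subseteq{^\perp\mathscr{C}}$. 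Splicing along $G$ produces $\cdots\to P_1\to P_0\to C^0\to C^1\to\cdots$ with $\Im(P_0\to C^0)\cong G$, and exactness of this complex is clear. It then remains to verify $\Hom_\A(-,\mathscr{C})$-exactness: the coresolution half is $\Hom_\A(-,\mathscr{C})$-exact by construction, and for each $C\in\mathscr{C}$ every short exact sequence $0\to G_{i+1}\to P_i\to G_i\to 0$ on the resolution side stays exact after applying $\Hom_\A(-,C)$, because $\Ext^1_\A(G_i,C)=0$ (with $G_0:=G$). The standard fact — used repeatedly in the proofs above — that a concatenation of $\Hom_\A(-,\mathscr{C})$-exact short exact sequences is again $\Hom_\A(-,\mathscr{C})$-exact then completes this implication.

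For $(3)\Rightarrow(1)$ I would invoke Theorem \ref{3.6}. Given a sequence as in (3), all the $W_i$ and $W^i$ lie in $\mathscr{C}\cup\mathscr{P}\subseteq r\mathcal{G}(\mathscr{C})$, and exactness yields $G\cong\Im(W_0\to W^0)=\Ker(W^0\to W^1)$. After the evident shift of indices placing the morphism $W_0\to W^0$ in the position of $G_0\to G_{-1}$, this doubly infinite $\Hom_\A(-,\mathscr{C})$-exact sequence of objects of $r\mathcal{G}(\mathscr{C})$ is precisely one of the kind occurring in condition (3) of Theorem \ref{3.6}, so that theorem gives $G\in r\mathcal{G}(\mathscr{C})$.

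I do not anticipate a serious obstacle here, as Theorem \ref{3.6} and the projective generator hypothesis do most of the work. The only point demanding care is the $\Hom_\A(-,\mathscr{C})$-exactness bookkeeping at the splice point in $(1)\Rightarrow(2)$, which I would reduce, as sketched, to the vanishing $\Ext^1_\A(G_i,C)=0$ for the syzygies produced by the generator $\mathscr{P}$, together with the concatenation lemma for $\Hom_\A(-,\mathscr{C})$-exact sequences.
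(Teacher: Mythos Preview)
Your proposal is correct and follows essentially the same approach as the paper: the paper declares $(1)\Leftrightarrow(2)\Rightarrow(3)$ ``clear'' and obtains $(3)\Rightarrow(1)$ from Corollary~\ref{3.7}, which is itself an immediate consequence of Theorem~\ref{3.6}(3)---exactly the result you invoke. Your write-up simply unpacks the ``clear'' implication $(1)\Rightarrow(2)$ in detail (constructing the $\mathscr{P}$-resolution by iterating the generator property and checking $\Hom_\A(-,\mathscr{C})$-exactness via $\Ext^1_\A(G_i,C)=0$), which is precisely what the paper leaves implicit.
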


\begin{proof}
The implications $(1)\Leftrightarrow (2)\Rightarrow (3)$ are clear. Since $\C\cup\mathscr{P}\subseteq r\mathcal{G}(\C)$
by assumption, the implication $(3)\Rightarrow (1)$ follows from Corollary \ref{3.7}.
\end{proof}

In the following, we will give some criteria for computing the $\mathcal{G}(\C)$-projective dimension
of an object in $\A$. We need the following lemma.

\begin{lemma}\label{3.9}
If $$0\to K \to G_1 \buildrel {f} \over \longrightarrow G_0 \to A \to 0$$
is an exact sequence in $\A$ with $G_0,G_1\in r\mathcal{G}(\C)$, then
there exists an exact sequence
$$0\to K \to C \to G \to A \to 0$$ in $\A$ with $C\in\C$ and $G\in r\mathcal{G}(\C)$.
\end{lemma}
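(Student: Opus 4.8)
The plan is to split the given four-term sequence at the image of $f$, to attach a $\C$-cosyzygy to $G_1$ by means of Lemma \ref{3.3}, and then to assemble the desired sequence by two successive push-outs. Set $I:=\Im f$, so that the given sequence breaks into the two short exact sequences
$$0\to K\to G_1\to I\to 0\qquad\text{and}\qquad 0\to I\to G_0\to A\to 0.$$
Since $G_1\in r\mathcal{G}(\C)$, Lemma \ref{3.3} provides an exact sequence $0\to G_1\to C_1\to G_1'\to 0$ in $\A$ with $C_1\in\C$ and $G_1'\in r\mathcal{G}(\C)$.

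Next I would form the push-out of the monomorphism $G_1\to C_1$ along the epimorphism $G_1\to I$, and denote the push-out by $P$. The resulting push-out diagram, with exact rows and columns of exactly the shape appearing in the proofs of Proposition \ref{3.4} and Theorem \ref{3.6}, contains the short exact sequences
$$0\to K\to C_1\to P\to 0\qquad\text{and}\qquad 0\to I\to P\to G_1'\to 0.$$
Then I would form the push-out of the monomorphism $I\to P$ (the left-hand map of the second sequence above) along the monomorphism $I\to G_0$, and denote this push-out by $G$; the associated push-out diagram yields the short exact sequences
$$0\to P\to G\to A\to 0\qquad\text{and}\qquad 0\to G_0\to G\to G_1'\to 0.$$
Since $G_0,G_1'\in r\mathcal{G}(\C)$ and $r\mathcal{G}(\C)$ is closed under extensions by Proposition \ref{extension-closed}(1), the last sequence shows $G\in r\mathcal{G}(\C)$.

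Finally I would splice the sequences $0\to K\to C_1\to P\to 0$ and $0\to P\to G\to A\to 0$ along the common term $P$. Since $P\to G$ is a monomorphism, the composite $C_1\to P\to G$ has kernel $K$ and image $\Im(P\to G)=\Ker(G\to A)$, so
$$0\to K\to C_1\to G\to A\to 0$$
is exact; taking $C:=C_1\in\C$ together with $G\in r\mathcal{G}(\C)$ then completes the argument.

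I do not expect a genuine obstacle here: the only points that need attention are checking that the two push-out squares really produce the four short exact sequences stated (that is, correctly identifying the kernels and cokernels of the comparison maps, via the $3\times 3$ push-out diagrams) and that the final splicing is legitimate. The one non-formal ingredient is the combination of Lemma \ref{3.3}, which manufactures the $\C$-cosyzygy $G_1'$ of $G_1$, with the extension-closedness of $r\mathcal{G}(\C)$ from Proposition \ref{extension-closed}(1), which is precisely what keeps the middle term $G$ inside $r\mathcal{G}(\C)$.
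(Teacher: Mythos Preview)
Your proposal is correct and follows essentially the same approach as the paper's own proof: both use Lemma~\ref{3.3} to embed $G_1$ in some $C\in\C$ with cokernel in $r\mathcal{G}(\C)$, then perform two successive push-outs (first over $G_1$, then over $\Im f$) to produce the middle term $G$, verify $G\in r\mathcal{G}(\C)$ via Proposition~\ref{extension-closed}(1), and splice. Only the notation differs (your $I$, $C_1$, $G_1'$, $P$ are the paper's $\Im f$, $C$, $G'$, $L$).
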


\begin{proof}
Since $G_1\in r\mathcal{G}(\C)$, by Lemma \ref{3.3} there exists an exact sequence
$$0\to G_1 \to C \to G^{'}\to 0$$
in $\A$ with $C\in\C$ and
$G^{'}\in r\mathcal{G}(\C)$.
The following push-out diagram
$$\xymatrix{
& & 0 \ar[d] &0 \ar@{-->}[d] &   \\
0 \ar[r]  & K \ar@{==}[d]  \ar[r] & G_1 \ar[d]
\ar[r]  &\Im f \ar@{-->}[d] \ar[r] & 0 \\
0 \ar@{-->}[r] & K \ar@{-->}[r] & C \ar[d] \ar@{-->}[r] & L \ar@{-->}[d] \ar@{-->}[r] & 0  \\
&  & G^{'} \ar[d]
\ar@{==}[r] & G^{'}\ar@{-->}[d]  &   \\
&  & 0  & 0  &   } $$
yields a push-out diagram
$$\xymatrix{
& 0 \ar[d] &0 \ar@{-->}[d] &   &    \\
0 \ar[r] & \Im f  \ar[d] \ar[r] & G_0
\ar@{-->}[d]  \ar[r]  & A \ar@{==}[d] \ar[r]  &0  \\
0 \ar@{-->}[r]  & L \ar[d]  \ar@{-->}[r] &G
\ar@{-->}[d] \ar@{-->}[r] & A \ar@{-->}[r]  &0  \\
& G^{'}\ar[d]\ar@{==}[r] &G^{'} \ar@{-->}[d] &   &    \\
&  0 & 0.  &  & }$$
By Proposition \ref{extension-closed}(1), the middle column in the last diagram implies that $G\in r\mathcal{G}(\C)$.
Splicing the middle rows of the two diagrams above, we get the desired exact sequence.
\end{proof}

The following result provides some criteria for computing the $\mathcal{G}(\C)$-projective dimension
of an object in $\A$, which shows that any object in $\A$ with finite $r\mathcal{G}(\C)$-projective dimension
is isomorphic to a kernel (resp. a cokernel) of a morphism from an object in $\A$
with finite $\C$-projective dimension to an object in $r\mathcal{G}(\C)$.

\begin{theorem}\label{3.10} For any $A\in \A$ and $n\geq 0$, the following statements are equivalent.
\begin{enumerate}
\item[(1)] $r\mathcal{G}(\C)$-$\pd A\leq n$.
\item[(2)] There exists an exact sequence
$$0\to H \to G \to A\to 0$$
in $\A$ with $G\in r\mathcal{G}(\C)$ and $\C$-$\pd H\leq n-1$.
\item[(3)] There exists an exact ($\Hom_{\A}(-,\C)$-exact) sequence
$$0\to A \to H^{'} \to G^{'} \to 0$$
in $\A$ with $G^{'}\in r\mathcal{G}(\C)$ and $\C$-$\pd H^{'}\leq n$.
\end{enumerate}
\end{theorem}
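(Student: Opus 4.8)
The plan is to establish the cycles $(1)\Rightarrow(2)\Rightarrow(1)$ and $(1)\Rightarrow(3)\Rightarrow(1)$, using Lemma \ref{3.9}, Lemma \ref{3.3} and Proposition \ref{extension-closed} together with standard pullback/pushout and syzygy-splicing arguments. The fact used throughout is that the standing hypothesis $\C\perp\C$ forces $\C\subseteq r\mathcal{G}(\C)$, so every $\C$-resolution is in particular an $r\mathcal{G}(\C)$-resolution. For $(2)\Rightarrow(1)$ this is all that is needed: given an exact sequence $0\to H\to G\to A\to 0$ with $G\in r\mathcal{G}(\C)$ and $\C$-$\pd H\le n-1$, splicing a $\C$-resolution of $H$ of length $\le n-1$ onto it produces an $r\mathcal{G}(\C)$-resolution of $A$ of length $\le n$, so $r\mathcal{G}(\C)$-$\pd A\le n$.

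The implication $(1)\Rightarrow(2)$ is the technical heart, and I would prove it by induction on $n$; the case $n=0$ is trivial (then $A\in r\mathcal{G}(\C)$, and one takes $H=0$). For $n\ge 1$, choose an $r\mathcal{G}(\C)$-resolution of $A$ and let $K$ be its first syzygy, so $r\mathcal{G}(\C)$-$\pd K\le n-1$ and there is an exact sequence $0\to K\to G_0\to A\to 0$ with $G_0\in r\mathcal{G}(\C)$. The obstacle is that Lemma \ref{3.9} takes as input an exact sequence $0\to X\to G_1\to G_0\to A\to 0$ in which only $G_1$ and $G_0$ are required to lie in $r\mathcal{G}(\C)$, which is not the shape of an arbitrary $r\mathcal{G}(\C)$-resolution; the remedy is to first apply the induction hypothesis to $K$, obtaining an exact sequence $0\to H_K\to G_K\to K\to 0$ with $G_K\in r\mathcal{G}(\C)$ and $\C$-$\pd H_K\le n-2$, then splice it with $0\to K\to G_0\to A\to 0$ to get an exact sequence $0\to H_K\to G_K\to G_0\to A\to 0$, and only then invoke Lemma \ref{3.9}. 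The latter returns an exact sequence $0\to H_K\to C\to G\to A\to 0$ with $C\in\C$ and $G\in r\mathcal{G}(\C)$; putting $H=\Coker(H_K\to C)$, the exact sequence $0\to H_K\to C\to H\to 0$ together with $C\in\C$ and $\C$-$\pd H_K\le n-2$ gives $\C$-$\pd H\le n-1$, while $0\to H\to G\to A\to 0$ is the sequence required in $(2)$.

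For $(1)\Rightarrow(3)$ I would begin with the sequence $0\to H\to G\to A\to 0$ supplied by $(1)\Rightarrow(2)$ and use Lemma \ref{3.3} to produce an exact $\Hom_{\A}(-,\C)$-exact sequence $0\to G\to C_G\to G_1\to 0$ with $C_G\in\C$ and $G_1\in r\mathcal{G}(\C)$. Forming the pushout of $G\to A$ along $G\to C_G$ yields a commutative diagram with exact rows and columns containing both $0\to H\to C_G\to Q\to 0$ and $0\to A\to Q\to G_1\to 0$. From the first sequence, $C_G\in\C$ and $\C$-$\pd H\le n-1$ give $\C$-$\pd Q\le n$; in the second, $G_1\in r\mathcal{G}(\C)\subseteq{}^{\perp}\C$ forces $\Ext^1_{\A}(G_1,C)=0$ for all $C\in\C$, so $\Hom_{\A}(Q,C)\to\Hom_{\A}(A,C)$ is surjective and the sequence $0\to A\to Q\to G_1\to 0$ is automatically $\Hom_{\A}(-,\C)$-exact. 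Thus $H'=Q$ and $G'=G_1$ satisfy $(3)$.

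Finally, for $(3)\Rightarrow(1)$, given an exact sequence $0\to A\to H'\to G'\to 0$ with $G'\in r\mathcal{G}(\C)$ and $\C$-$\pd H'\le n$, take a $\C$-resolution of $H'$ and let $H''$ be its first syzygy, so $0\to H''\to C_0\to H'\to 0$ is exact with $C_0\in\C$ and $\C$-$\pd H''\le n-1$. The pullback of $C_0\to H'$ along $A\to H'$ gives an object $X$ sitting in exact sequences $0\to X\to C_0\to G'\to 0$ and $0\to H''\to X\to A\to 0$. Since $C_0\in\C\subseteq r\mathcal{G}(\C)$ and $G'\in r\mathcal{G}(\C)$, Proposition \ref{extension-closed}(2) gives $X\in r\mathcal{G}(\C)$; then $0\to H''\to X\to A\to 0$ is precisely a sequence of the form occurring in $(2)$, and the already-established implication $(2)\Rightarrow(1)$ yields $r\mathcal{G}(\C)$-$\pd A\le n$. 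The pullback/pushout manipulations and the $\Hom_{\A}(-,\C)$-exactness checks in the last two implications are routine; the only genuinely delicate step is the interplay between the induction and Lemma \ref{3.9} in $(1)\Rightarrow(2)$.
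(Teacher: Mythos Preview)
Your proof is correct and follows essentially the same approach as the paper: the paper proves the cycle $(1)\Rightarrow(2)\Rightarrow(3)\Rightarrow(1)$, using induction together with Lemma~\ref{3.9} for $(1)\Rightarrow(2)$, the pushout along the sequence from Lemma~\ref{3.3} for $(2)\Rightarrow(3)$, and a pullback plus Proposition~\ref{extension-closed}(2) for $(3)\Rightarrow(1)$, exactly as you do. The only cosmetic differences are that the paper treats the case $n=1$ separately in $(1)\Rightarrow(2)$ (whereas your induction absorbs it via the convention $\C\text{-}\pd\,0\le -1$), and that you route $(3)\Rightarrow(1)$ through the explicitly stated trivial implication $(2)\Rightarrow(1)$ rather than reading it off directly from the pullback diagram.
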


\begin{proof}
$(1)\Rightarrow (2)$
We proceed by induction on $n$. If $n=0$ then $H=0$
and $G=A$ give the desired exact sequence. If $n=1$ then there
exists an exact sequence
$$0\to G_1\to G_0\to A\to 0$$
in $\A$ with $G_0,G_1\in r\mathcal{G}(\C)$. Applying Lemma \ref{3.9} with $K=0$,
we get an exact sequence
$$0\to C\to G^{'}_0\to A\to 0$$
in $\A$ with $C\in\C$ and $G^{'}_0\in r\mathcal{G}(\C)$.
Now suppose $n\geq 2$. Then there exists an exact sequence
$$0\to G_n\to G_{n-1}\to\cdots \to G_0\to A\to 0$$
in $\A$ with all $G_i$ in $r\mathcal{G}(\C)$.
Set $T:=\Im(G_{1}\to G_{0})$. By the induction hypothesis, we get the following exact sequence
$$0\to C_n\to C_{n-1}\to C_{n-2}\to \cdots \to C_{2}\to G^{'}_1\to T\to 0$$
in $\A$ with all $C_i$ in $\C$ and $G^{'}_1\in r\mathcal{G}(\C)$.
Set $B:=\Im(C_2\to G^{'}_1)$. By Lemma \ref{3.9}, we get an exact sequence
$$0\to B\to C_1\to G\to A\to 0$$
in $\A$ with $C_1\in \C$ and $G\in r\mathcal{G}(\C)$. Thus we get
the desired exact sequence
$$0\to C_n\to C_{n-1}\to C_{n-2}\to\cdots \to C_{1}\to G\to A\to 0.$$
Then we get the desired exact sequence by putting $H:=\Coker(C_2\to C_1)$.

$(2)\Rightarrow (3)$ Let
$$0\to H \to G \to A\to 0$$
be an exact sequence in $\A$ with $G\in r\mathcal{G}(\C)$
and $\C$-$\pd H\leq n-1$. Since $G\in r\mathcal{G}(\C)$, by Lemma \ref{3.3} there exists an exact $\Hom_{\A}(-,\C)$-exact sequence
$$0\to G\to C\to G^{'}\to 0$$ in $\A$ with $C\in\C$
and
$G^{'}\in r\mathcal{G}(\C)$. Consider the following push-out diagram
$$\xymatrix{
& & 0 \ar[d] &0 \ar@{-->}[d] &   \\
0 \ar[r]  & H \ar@{==}[d]  \ar[r] & G \ar[d]\ar[r]  &A \ar@{-->}[d] \ar[r] & 0 \\
0 \ar@{-->}[r] & H \ar@{-->}[r] & C \ar[d] \ar@{-->}[r] & H^{'} \ar@{-->}[d] \ar@{-->}[r] & 0  \\
&  & G^{'} \ar[d]
\ar@{==}[r] & G^{'}\ar@{-->}[d]  &   \\
&  & 0  & 0.   &   } $$
By the middle row in this diagram, we have $\C$-$\pd H^{'}\leq n$. Moreover, since $G^{'}\in r\mathcal{G}(\C)$, the rightmost column is $\Hom_{\A}(-,\C)$-exact.
Thus we get the desired exact sequence.

$(3)\Rightarrow (1)$ Let
$$0\to A \to H^{'} \to G^{'} \to 0$$
be an exact sequence in $\A$ with $G^{'}\in r\mathcal{G}(\C)$ and $\C$-$\pd H^{'}\leq n$.
Then there exists an exact sequence
$$0\ra C_n\ra \cdots\ra C_1\ra C_0\ra H^{'}\ra 0$$ in $\A$ with all $C_i$ in $\C$. Set $K:=\mbox{Ker}(C_0\ra H^{'})$.
Then $\C$-$\pd K\leq n-1$. Consider the following pull-back diagram
$$\xymatrix@R=20pt@C=20pt{& 0 \ar@{-->}[d] & 0 \ar[d]&& &\\
& K \ar@{==}[r] \ar@{-->}[d] & K \ar[d]& &&\\
0 \ar@{-->}[r] & G \ar@{-->}[d] \ar@{-->}[r] & C_0 \ar[d] \ar@{-->}[r] &G^{'} \ar@{==}[d] \ar@{-->}[r] & 0\\
0 \ar[r] & A \ar@{-->}[d]\ar[r] & H^{'} \ar[r] \ar[d] & G^{'} \ar[r] & 0 &\\
& 0 & 0. & && }$$
Applying Proposition \ref{extension-closed}(2) to the middle row in this diagram yields  $G\in r\mathcal{G}(\C)$.
Thus $r\mathcal{G}(\C)$-$\pd A\leq n$ by the leftmost column in the above diagram.
\end{proof}

\section{Weak Auslander-Buchweitz contexts}

Following the approximation theory of Auslander-Buchweitz \cite{AuBu}, Hashimoto \cite{Ha00} introduced
the following definition.

\begin{definition}\label{4.1} (\cite{Ha00})
A triple $(\mathscr{X},\mathscr{Y},\omega)$ of subcategories of $\A$ is called
a {\it weak Auslander-Buchweitz context} in $\A$ if the following conditions are satisfied.
\begin{enumerate}
\item[(1)] $\mathscr{X}$ is closed under extensions, kernels of epimorphisms and direct summands.
\item[(2)] $\mathscr{Y}\subseteq \mathscr{X}$-$\pd^{<\infty}$ and $\mathscr{Y}$ is closed under extensions,
cokernels of monomorphisms and direct summands.
\item[(3)] $\omega=\mathscr{X}\cap \mathscr{Y}$ and $\omega$ is an injective cogenerator for $\mathscr{X}$.
\end{enumerate}
A weak Auslander-Buchweitz context $(\mathscr{X},\mathscr{Y},\omega)$ is called an {\it Auslander-Buchweitz context}
if $\A=\mathscr{X}$-$\pd^{<\infty}$.
\end{definition}

\begin{definition}\label{4.2}  (\cite{EO02})
Let $\mathscr{U},\mathscr{V}$ be subcategories of $\A$.
\begin{enumerate}
\item[(1)] The pair $(\mathscr{U},\mathscr{V})$ is called a {\it cotorsion pair} in $\A$ if
$\mathscr{U}={^{\bot_1}\mathscr{V}}$ and $\mathscr{V}={\mathscr{U}^{\bot_1}}$; in this case,
$\C:=\mathscr{U}\cap\mathscr{V}$ is called the {\it kernel} of $(\mathscr{U},\mathscr{V})$.
\item[(2)] A cotorsion pair $(\mathscr{U},\mathscr{V})$ is said to {\it have enough injectives} if
for any $A\in\A$, there exists an exact sequence
$$0\to A \to V \to U \to 0$$
in $\A$ with $V\in\mathscr{V}$ and $U\in\mathscr{U}$. Dually, a cotorsion pair
$(\mathscr{U},\mathscr{V})$ is said to {\it have enough projectives} if
for any $A\in\A$, there exists an exact sequence
$$0 \to V^{'} \to U^{'} \to A \to 0$$
in $\A$ with $V^{'}\in\mathscr{V}$ and $U^{'}\in\mathscr{U}$.
\item[(3)] A cotorsion pair $(\mathscr{U},\mathscr{V})$ is called {\it hereditary} if one of the following equivalent
conditions is satisfied.
\begin{enumerate}
\item[(3.1)] $\mathscr{U}\perp \mathscr{V}$.
\item[(3.2)] $\mathscr{U}$ is resolving in the sense that $\mathscr{U}$ contains all projectives
in $\A$, $\mathscr{U}$ is closed under extensions and kernels of epimorphisms.
\item[(3.3)] $\mathscr{V}$ is coresolving in the sense that $\mathscr{V}$ contains all injectives
in $\A$, $\mathscr{V}$ is closed under extensions and cokernels of monomorphisms.
\end{enumerate}
\end{enumerate}
\end{definition}

In what follows, $(\mathscr{U},\mathscr{V})$ is a hereditary cotorsion pair and $\C:=\mathscr{U}\cap \mathscr{V}$ is its kernel.
The following is a standard observation.

\begin{lemma}\label{4.3}
For any $A\in\A$ and $n\geq 0$, the following statements are equivalent.
\begin{enumerate}
\item[(1)] $\mathscr{U}\mbox{-}\operatorname{pd}A\leq n$.
\item[(2)] $\Ext^{n+1}_{\A}(A,V)=0$ for any $V\in\mathscr{V}$.
\item[(3)] $\Ext^{\geq n+1}_{\A}(A,V)=0$ for any $V\in\mathscr{V}$.
\end{enumerate}
\end{lemma}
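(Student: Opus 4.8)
The plan is to prove the cycle of implications $(1)\Rightarrow(3)\Rightarrow(2)\Rightarrow(1)$. Each step is a standard dimension-shifting argument, and the only substantive ingredients are that the cotorsion pair is hereditary --- so that, by Definition \ref{4.2}(3), $\mathscr{U}\perp\mathscr{V}$ and $\mathscr{U}$ is resolving, hence contains all projectives of $\A$ --- together with the defining equality $\mathscr{U}={}^{\perp_1}\mathscr{V}$.

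For $(1)\Rightarrow(3)$, fix an exact sequence $0\to U_n\to U_{n-1}\to\cdots\to U_0\to A\to 0$ with all $U_i\in\mathscr{U}$, put $\Omega_0:=A$ and $\Omega_{i+1}:=\Ker(U_i\to\Omega_i)$, so that $\Omega_n\cong U_n$, and break it into short exact sequences $0\to\Omega_{i+1}\to U_i\to\Omega_i\to 0$. Since the pair is hereditary, $\Ext^{\geq1}_\A(U_i,V)=0$ for every $V\in\mathscr{V}$, so the long exact sequence of $\Ext_\A(-,V)$ gives $\Ext^{j}_\A(\Omega_{i+1},V)\cong\Ext^{j+1}_\A(\Omega_i,V)$ for all $j\geq1$. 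Iterating these $n$ isomorphisms yields $\Ext^{m}_\A(A,V)\cong\Ext^{m-n}_\A(U_n,V)$ for every $m\geq n+1$, and the right-hand side vanishes because $U_n\in\mathscr{U}$ and $\mathscr{U}\perp\mathscr{V}$; hence $\Ext^{\geq n+1}_\A(A,V)=0$. The implication $(3)\Rightarrow(2)$ is trivial.

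For $(2)\Rightarrow(1)$, take a projective resolution $\cdots\to P_1\to P_0\to A\to 0$ in $\A$ and let $\Omega^nA$ be its $n$-th syzygy, so there is an exact sequence $0\to\Omega^nA\to P_{n-1}\to\cdots\to P_0\to A\to 0$. Running the same dimension shift along $0\to\Omega^{k+1}A\to P_k\to\Omega^kA\to 0$, now using $\Ext^{\geq1}_\A(P_k,V)=0$ for projective $P_k$, gives $\Ext^1_\A(\Omega^nA,V)\cong\Ext^{n+1}_\A(A,V)$ for every $V\in\mathscr{V}$; by hypothesis this is zero, so $\Omega^nA\in{}^{\perp_1}\mathscr{V}=\mathscr{U}$. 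As each $P_k$ is projective and therefore lies in $\mathscr{U}$, the displayed sequence is a $\mathscr{U}$-resolution of $A$ of length $\leq n$, i.e.\ $\mathscr{U}\mbox{-}\pd A\leq n$. (For $n=0$ this is direct: $(2)$ says exactly $A\in{}^{\perp_1}\mathscr{V}=\mathscr{U}$.)

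There is no genuine obstacle; as the statement says, this is a standard observation. The two points that merit attention are: invoking heredity --- not merely $\mathscr{U}={}^{\perp_1}\mathscr{V}$ --- to secure $\Ext^{\geq1}_\A(\mathscr{U},\mathscr{V})=0$ for the shift in $(1)\Rightarrow(3)$ and the containment of all projectives in $\mathscr{U}$ for the conclusion of $(2)\Rightarrow(1)$; and the use of enough projectives in $\A$ in $(2)\Rightarrow(1)$. Should the latter not be among the standing hypotheses, one can replace the projective resolution by a sequence $0\to V^{(n)}\to U^{(n-1)}\to\cdots\to U^{(0)}\to A\to 0$ built by repeatedly applying ``enough projectives'' of the cotorsion pair (Definition \ref{4.2}(2)), noting that the identical syzygy computation forces $V^{(n)}\in{}^{\perp_1}\mathscr{V}=\mathscr{U}$.
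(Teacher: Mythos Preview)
The paper gives no proof of this lemma at all; it is introduced merely as ``a standard observation'' and left to the reader. Your argument is correct and is precisely the standard dimension-shifting proof one would expect, so there is nothing to compare against.

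Your closing remark is the one point worth highlighting. The paper does not impose a blanket hypothesis that $\A$ has enough projectives, nor is the cotorsion pair assumed to have enough projectives at the point where Lemma~\ref{4.3} is stated; so your caveat about the step $(2)\Rightarrow(1)$ is well taken. Your proposed fix---iterating the ``enough projectives'' property of the pair to build $0\to V^{(n)}\to U^{(n-1)}\to\cdots\to U^{(0)}\to A\to 0$ and then shifting along it---works whenever that hypothesis is available. In the paper the lemma is only invoked inside Proposition~\ref{4.5}, where the cotorsion pair is assumed to have enough injectives; strictly speaking that is the dual hypothesis, but the equivalence of the three forms of heredity in Definition~\ref{4.2}(3) is itself borrowed from the module-category setting of \cite{EO02}, so the paper is tacitly working in a context where these foundational issues do not arise. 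Your awareness of the gap is more careful than the paper itself.
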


\begin{lemma}\label{4.4}
If $(\mathscr{U},\mathscr{V})$ has enough injectives, then $\C$ is an injective cogenerator for $\mathscr{U}$.
\end{lemma}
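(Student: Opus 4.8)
The plan is to unwind the definition of ``injective cogenerator'' from the end of Section~2: I must show (a) $\mathscr{C}$ is a cogenerator for $\mathscr{U}$, i.e. $\mathscr{C}\subseteq\mathscr{U}$ and every $U\in\mathscr{U}$ sits in a short exact sequence $0\to U\to C\to U'\to 0$ with $C\in\mathscr{C}$ and $U'\in\mathscr{U}$; and (b) $\mathscr{U}\perp\mathscr{C}$. Both will come essentially for free from the hereditary structure of $(\mathscr{U},\mathscr{V})$ together with the enough-injectives hypothesis.

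For (b), I would simply note that $\mathscr{C}=\mathscr{U}\cap\mathscr{V}\subseteq\mathscr{V}$, and since $(\mathscr{U},\mathscr{V})$ is hereditary, Definition~\ref{4.2}(3.1) gives $\mathscr{U}\perp\mathscr{V}$; restricting to $\mathscr{C}$ yields $\mathscr{U}\perp\mathscr{C}$.

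For (a), the containment $\mathscr{C}=\mathscr{U}\cap\mathscr{V}\subseteq\mathscr{U}$ is immediate. Now fix $U\in\mathscr{U}$. Because $(\mathscr{U},\mathscr{V})$ has enough injectives, there is an exact sequence
$$0\to U\to V\to U'\to 0$$
with $V\in\mathscr{V}$ and $U'\in\mathscr{U}$. The key point is that $V$ actually lies in $\mathscr{C}$: since $(\mathscr{U},\mathscr{V})$ is hereditary, $\mathscr{U}$ is resolving in the sense of Definition~\ref{4.2}(3.2), hence closed under extensions, and as $U,U'\in\mathscr{U}$ we get $V\in\mathscr{U}$, so $V\in\mathscr{U}\cap\mathscr{V}=\mathscr{C}$. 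Thus the displayed sequence is exactly the one required in the definition of a cogenerator, and combining with (b) finishes the proof.

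The argument is short and there is no real obstacle; the only step needing a moment's care is precisely the observation that the middle term $V$ of the enough-injectives sequence is forced into the kernel $\mathscr{C}$, which is where the closure of $\mathscr{U}$ under extensions (a consequence of hereditariness) is used.
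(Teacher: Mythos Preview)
Your proof is correct and matches the paper's own argument essentially line for line: take the enough-injectives sequence $0\to U\to V\to U'\to 0$, use extension-closure of $\mathscr{U}$ to force $V\in\mathscr{U}\cap\mathscr{V}=\mathscr{C}$, and invoke $\mathscr{U}\perp\mathscr{V}\supseteq\mathscr{C}$ from hereditariness. The only difference is cosmetic---you spell out the containment $\mathscr{C}\subseteq\mathscr{U}$ and the orthogonality step more explicitly than the paper does.
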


\begin{proof}
If $(\mathscr{U},\mathscr{V})$ has enough injectives, then for any $U\in\mathscr{U}$ there exists an exact sequence
$$0\to U \to V \to U^{1} \to 0$$
in $\A$ with $V\in\mathscr{V}$ and $U^{1}\in\mathscr{U}$. Since $\mathscr{U}$ is closed under extensions,
we have $V\in\mathscr{U}$ and so $V\in\C$. Since $\mathscr{U}\bot\C$ clearly,
$\C$ is an injective cogenerator for $\mathscr{U}$.
\end{proof}

The following result is used for obtaining our main result (Theorem \ref{4.8}).

\begin{proposition}\label{4.5}
If $(\mathscr{U},\mathscr{V})$ has enough injectives, then we have
\begin{enumerate}
\item[(1)] $r\mathcal{G}(\C)\cap \mathscr{U}$-$\pd^{<\infty}=\mathscr{U}$.
\item[(2)] $\C$-$\pd^{<\infty}=\mathscr{U}$-$\pd^{<\infty}\cap \mathscr{V}$.
\item[(3)] $\C$-$\pd^{<\infty}$ is closed under extensions, cokernels of monomorphisms and direct summands.
\end{enumerate}
\end{proposition}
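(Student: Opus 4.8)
The plan is to work through the three parts in order, using Lemma \ref{4.4} (so that $\C$ is an injective cogenerator for $\mathscr{U}$) together with the hereditary hypothesis on $(\mathscr{U},\mathscr{V})$ throughout. For part (1), one inclusion is clear: $\mathscr{U}\subseteq r\mathcal{G}(\C)$ because $\mathscr{U}\subseteq{}^{\perp}\C$ (heredity) and $\mathscr{U}\subseteq\cores\widetilde{\mathscr{C}}$ by repeatedly using that $\C$ is an injective cogenerator for $\mathscr{U}$ (build a coresolution of any $U\in\mathscr{U}$ by objects of $\C$, which is automatically $\Hom_{\A}(-,\C)$-exact since $\mathscr{U}\perp\C$); trivially $\mathscr{U}\subseteq\mathscr{U}\text{-}\pd^{<\infty}$. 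For the reverse inclusion, take $A\in r\mathcal{G}(\C)$ with $\mathscr{U}\text{-}\pd A<\infty$. I would induct on $\mathscr{U}\text{-}\pd A$: if it is $0$ there is nothing to prove; otherwise pick a short exact sequence $0\to K\to U_0\to A\to 0$ with $U_0\in\mathscr{U}$ and $\mathscr{U}\text{-}\pd K$ strictly smaller. Since $A\in r\mathcal{G}(\C)$, Lemma \ref{3.3} gives a $\Hom_{\A}(-,\C)$-exact sequence $0\to A\to C\to A'\to 0$ with $C\in\C$ and $A'\in r\mathcal{G}(\C)$. Forming the appropriate pull-back (or using Theorem \ref{3.10} with $n=0$ directly, after noting $\mathscr{U}\text{-}\pd K<\infty$ forces $A\in r\mathcal{G}(\C)$-$\pd^{<\infty}$ via $\C\subseteq\mathscr{U}$), one reduces to showing that an object which is both in $r\mathcal{G}(\C)$ and has $r\mathcal{G}(\C)$-$\pd$ finite actually lies in $r\mathcal{G}(\C)$ — but that is immediate. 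The cleanest route is: $\C\subseteq\mathscr{U}$, so $\C\text{-}\pd K\leq\mathscr{U}\text{-}\pd K$ may be infinite, so instead I would argue directly that $\mathscr{U}\text{-}\pd A<\infty$ and $A\in{}^{\perp}\C$ force, via Lemma \ref{4.3} and a dimension shift along a $\C$-coresolution of $A$ (which exists since $A\in\cores\widetilde{\mathscr{C}}$), that $\mathscr{U}\text{-}\pd A=0$.

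For part (2), the inclusion $\C\text{-}\pd^{<\infty}\subseteq\mathscr{U}\text{-}\pd^{<\infty}\cap\mathscr{V}$ is the easy direction: $\C\subseteq\mathscr{U}$ gives $\mathscr{U}\text{-}\pd\leq\C\text{-}\pd$, and $\C\subseteq\mathscr{V}$ with $\mathscr{V}$ coresolving (Definition \ref{4.2}(3.3)) — hence closed under kernels of epimorphisms in the relevant sense, i.e. $\mathscr{V}={}^{\ }\mathscr{U}^{\perp_1}$ is closed under extensions and, reading a $\C$-resolution, any object with a finite $\C$-resolution lies in $\mathscr{V}$ by repeated use that $\mathscr{V}$ is closed under kernels of epimorphisms among objects with $\Ext^1$-vanishing. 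Actually I would phrase it as: if $0\to C_n\to\cdots\to C_0\to A\to 0$ with $C_i\in\C\subseteq\mathscr{V}$, then chopping into short exact sequences and using that $\mathscr{V}$ is closed under cokernels of monomorphisms — no, under kernels of epimorphisms is what is needed here, which is exactly the dual/resolving statement; since $(\mathscr{U},\mathscr{V})$ is hereditary, $\mathscr{V}$ \emph{is} closed under kernels of epimorphisms by (3.3) restated appropriately, so $A\in\mathscr{V}$. For the reverse inclusion, let $A\in\mathscr{V}$ with $\mathscr{U}\text{-}\pd A=n<\infty$; I want $\C\text{-}\pd A\leq n$. Build a coresolution: since $(\mathscr{U},\mathscr{V})$ has enough injectives, $0\to A\to V_0\to U_0\to 0$ with $V_0\in\mathscr{V}$, $U_0\in\mathscr{U}$; as $\mathscr{V}$ is closed under extensions and $A,?\in\mathscr{V}$... the key point is that $V_0\in\mathscr{U}\cap\mathscr{V}=\C$ once we know $V_0\in\mathscr{U}$, which follows because $\mathscr{U}$ is resolving and $A\in\mathscr{U}\text{-}\pd^{<\infty}$ combined with $U_0\in\mathscr{U}$ — hmm, more carefully: take instead a $\mathscr{U}$-resolution $0\to U_n\to\cdots\to U_0\to A\to 0$ and dualize using enough injectives to convert it into a $\C$-resolution, with the syzygies staying in $\mathscr{V}$ because $\mathscr{V}$ is coresolving and closed under the relevant kernels. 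The honest statement is: $A\in\mathscr{V}$ and $\mathscr{U}\text{-}\pd A\leq n$ imply, by Lemma \ref{4.3}, $\Ext^{>n}(A,\mathscr{V})=0$; combined with $A\in\mathscr{V}$ this gives a finite $\C$-resolution by inductively taking $\mathscr{U}$-precovers $0\to A'\to U\to A\to 0$ (which exist via enough projectives? — the paper only assumes enough injectives, so I would instead resolve \emph{on the right}) and noting $U\in\mathscr{U}$, the coresolution terms land in $\C$ because an object of $\mathscr{U}$ with finite $\mathscr{U}$-pd that also lies in $\mathscr{V}$... this circularity is precisely what part (1) resolves, so I would deduce (2) from (1): $A\in\mathscr{U}\text{-}\pd^{<\infty}\cap\mathscr{V}\subseteq r\mathcal{G}(\C)\cap\mathscr{U}\text{-}\pd^{<\infty}\cap\mathscr{V}=\mathscr{U}\cap\mathscr{V}=\C$ when $\mathscr{U}\text{-}\pd A=0$, and induct on $\mathscr{U}\text{-}\pd A$ by splicing a short exact sequence whose outer terms have smaller or zero $\mathscr{U}$-pd and stay in $\mathscr{V}$.

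For part (3), closure of $\C\text{-}\pd^{<\infty}$ under extensions, cokernels of monomorphisms and direct summands: I would transport these from the identification in (2), $\C\text{-}\pd^{<\infty}=\mathscr{U}\text{-}\pd^{<\infty}\cap\mathscr{V}$. The class $\mathscr{U}\text{-}\pd^{<\infty}$ is closed under extensions, kernels of epimorphisms \emph{and cokernels of monomorphisms} and direct summands by a standard resolving-dimension argument (since $\mathscr{U}$ is resolving, $\mathscr{U}\text{-}\pd$ behaves well on short exact sequences — the "two out of three" property and the horseshoe lemma), and $\mathscr{V}$ is closed under extensions, cokernels of monomorphisms and direct summands because $(\mathscr{U},\mathscr{V})$ is hereditary (Definition \ref{4.2}(3.3)). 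Intersecting two classes each closed under these operations gives the result. The main obstacle I anticipate is getting part (1)'s nontrivial inclusion $r\mathcal{G}(\C)\cap\mathscr{U}\text{-}\pd^{<\infty}\subseteq\mathscr{U}$ cleanly without circular reference to (2); the cleanest fix is to note that for $A\in r\mathcal{G}(\C)$ we have $A\in\cores\widetilde{\mathscr{C}}$, so there is a $\Hom_{\A}(-,\C)$-exact coresolution $0\to A\to C^0\to C^1\to\cdots$ with $C^i\in\C\subseteq\mathscr{U}$, and then $\mathscr{U}\text{-}\pd A<\infty$ together with Lemma \ref{4.3} (applied with $V$ ranging over $\mathscr{V}$, using $\Ext^{\geq 1}(\mathscr{U},\mathscr{V})=0$ and a dimension shift up this coresolution) forces $\Ext^{\geq 1}(A,\mathscr{V})=0$, hence $\mathscr{U}\text{-}\pd A=0$, i.e. $A\in\mathscr{U}$.
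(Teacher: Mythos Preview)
Your approach to part~(3), transporting closure properties through the identification $\C\text{-}\pd^{<\infty}=\mathscr{U}\text{-}\pd^{<\infty}\cap\mathscr{V}$, is correct and is in fact a little cleaner than the paper, which invokes \cite{Hu} for extensions and cokernels of monomorphisms. The real problems lie in parts~(1) and~(2).

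For part~(1) your ``cleanest route'' does not work. Given $A\in r\mathcal{G}(\C)$ with $\mathscr{U}\text{-}\pd A=n$, the dimension shift along the $\C$-coresolution $0\to A\to C^0\to A^1\to 0$ yields $\Ext^i_{\A}(A,V)\cong\Ext^{i+1}_{\A}(A^1,V)$ for $i\geq 1$ and $V\in\mathscr{V}$, so $\Ext^1_{\A}(A,V)\cong\Ext^{m+1}_{\A}(A^m,V)$. But the same shift shows only that $\mathscr{U}\text{-}\pd A^m\leq n+m$, so you cannot force $\Ext^{m+1}_{\A}(A^m,V)=0$ for any $m$; the coresolution moves the Ext degree and the $\mathscr{U}$-projective dimension in lockstep. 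Knowing $A\in{}^{\perp}\C$ is of no help here because Lemma~\ref{4.3} is stated for $\mathscr{V}$, not $\C$. The paper's argument is genuinely different: one inducts on $n$, observes that in $0\to L\to U_0\to A\to 0$ the kernel $L$ lies in $r\mathcal{G}(\C)$ by Proposition~\ref{extension-closed}(2) (this is the step you never invoke), gets $L\in\mathscr{U}$ by induction, then applies Lemma~\ref{4.4} to $L$ (not to $A$) and pushes out to exhibit $A$ as a direct summand of an object of $\mathscr{U}$. Your earlier paragraph brushes up against this induction but then loses the thread: you take the cogenerator sequence for $A$ rather than for $L$, and the parenthetical about Theorem~\ref{3.10} with $n=0$ only recovers $A\in r\mathcal{G}(\C)$, which you already have.

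For part~(2) your inductive sketch is too loose to go through. Starting from a $\mathscr{U}$-resolution $0\to K\to U_0\to A\to 0$ there is no reason for $K$ or $U_0$ to lie in $\mathscr{V}$, and using enough injectives on $A$ does not reduce the $\mathscr{U}$-projective dimension. The paper avoids this by invoking Theorem~\ref{3.10}: since $\mathscr{U}\subseteq r\mathcal{G}(\C)$ one has $r\mathcal{G}(\C)\text{-}\pd A\leq n$, so Theorem~\ref{3.10} produces $0\to H\to G\to A\to 0$ with $G\in r\mathcal{G}(\C)$ and $\C\text{-}\pd H\leq n-1$; then part~(1) gives $G\in\mathscr{U}$, and closure properties of $\mathscr{V}$ force $G\in\mathscr{V}$, whence $G\in\C$. (An induction of the shape you propose can be made to work, but it requires the extra move of applying enough injectives to $K$ and pushing out to replace $U_0$ by an object of $\C$; you do not indicate this.)
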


\begin{proof}
(1) It is easy to see that $\mathscr{U}\subseteq r\mathcal{G}(\C)\cap \mathscr{U}$-$\pd^{<\infty}$
by Lemma \ref{4.4}.

Conversely, let $A\in r\mathcal{G}(\C)\cap \mathscr{U}$-$\pd^{<\infty}$. We will prove $A\in \mathscr{U}$
by induction on $n:=\mathscr{U}$-$\pd A$.
The case for $n=0$ is trivial. Now let $n\geq 1$ and
$$0\ra L\ra U_0\ra A\ra 0$$
be an exact sequence in $\A$ with $U_0\in\mathscr{U}(\subseteq r\mathcal{G}(\C))$
and $\mathscr{U}$-$\pd L\leq n-1$. By Proposition \ref{extension-closed}(2), we have $L\in r\mathcal{G}(\C)$.
Thus $L\in\mathscr{U}$ by the induction hypothesis. By Lemma \ref{4.4}, we have an exact sequence
$$0\ra L\ra C\ra U\ra 0$$ in $\A$ with $C\in \C$  and $U\in\mathscr{U}$. Consider the following push-out diagram
$$\xymatrix@R=20pt@C=20pt{
&0\ar[d]&0\ar@{-->}[d]&&\\
0\ar[r]&L\ar[r]\ar[d]&U_0\ar[r]\ar@{-->}[d]&A\ar@{==}[d]\ar[r]&0\\
0\ar@{-->}[r]&C\ar@{-->}[r]\ar[d]&Q\ar@{-->}[r]\ar@{-->}[d]&A\ar@{-->}[r]&0\\
&U\ar@{==}[r]\ar[d]&U\ar@{-->}[d]&&\\
&0&0.&& }$$
By the middle column in this diagram, we have $Q\in \mathscr{U}$. Since $A\in r\mathcal{G}(\C)$ and $C\in\C$,
it follows that $\Ext^1_{\A}(A,C)=0$ and the middle row in the above diagram splits.
Thus $A$ is isomorphic to a direct summand of $Q$ and $A\in\mathscr{U}$.

(2) Clearly, $\C$-$\pd^{<\infty}\subseteq\mathscr{U}$-$\pd^{<\infty}$.
Since $\mathscr{V}$ is closed under cokernels of monomorphisms,
we have $\C$-$\pd^{<\infty}\subseteq \mathscr{V}$. Thus
$\C$-$\pd^{<\infty}\subseteq\mathscr{U}$-$\pd^{<\infty}\cap \mathscr{V}$.

Conversely, let $A\in\mathscr{U}$-$\pd^{<\infty}\cap \mathscr{V}$ and $\mathscr{U}$-$\pd A=n(<\infty)$.
Since $\mathscr{U}\subseteq r\mathcal{G}(\C)$, by Theorem \ref{3.10} there exists an exact sequence
$$0\to H \to G \to A\to 0$$
in $\A$ with $G\in r\mathcal{G}(\C)$ and $\C$-$\pd H\leq n-1$. By Lemma \ref{4.3}, we have
$\mathscr{U}$-$\pd G\leq n$. Then $G\in\mathscr{U}$ by (1).
Since $\mathscr{V}$ is closed under cokernels of monomorphisms and extensions, we have $H\in\mathscr{V}$,
and hence $G\in\mathscr{V}$. Thus $G\in\mathscr{C}(=\mathscr{U}\cap \mathscr{V})$ and
$\mathscr{C}\mbox{-}\operatorname{pd}A\leq n$.

(3) By Lemma \ref{4.3}, $\mathscr{U}$-$\pd^{<\infty}$ is closed under direct summands. Since
$\mathscr{V}$ is closed under direct summands, so is $\C$-$\pd^{<\infty}$ by (2).

Since $\C\bot\C$, we have $\C$-$\pd^{<\infty}\subseteq{\C^{\bot}}$ by the dimension shifting.
Then any short exact sequence in $\A$ with the first term in $\C$-$\pd^{<\infty}$ is $\Hom_\A(\C,-)$-exact.
Therefore by \cite[Lemma 3.1(1)]{Hu}, we get $\C$-$\pd^{<\infty}$ is closed under extensions, and by \cite[Theorem 3.6(1)]{Hu}, we get
$\C$-$\pd^{<\infty}$ is closed under cokernels of monomorphisms.
\end{proof}

As a consequence, we have the following proposition.

\begin{proposition}\label{4.6}
Assume that $(\mathscr{U},\mathscr{V})$ has enough injectives. If $(r\mathcal{G}(\C),\C$-$\pd^{<\infty},\C)$
is an Auslander-Buchweitz context, then
$$(r\mathcal{G}(\C),\mathscr{U}\text{-}\pd^{<\infty}\cap \mathscr{V})=(r\mathcal{G}(\C),\C\text{-}\pd^{<\infty})$$
is a cotorsion pair.
\end{proposition}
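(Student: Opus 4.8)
The plan is to first observe that the two displayed pairs coincide literally: by Proposition \ref{4.5}(2) we have $\C\text{-}\pd^{<\infty}=\mathscr{U}\text{-}\pd^{<\infty}\cap\mathscr{V}$, so it suffices to prove that $(r\mathcal{G}(\C),\C\text{-}\pd^{<\infty})$ is a cotorsion pair in $\A$, i.e. to verify the two identities
$$r\mathcal{G}(\C)={}^{\perp_1}\!\big(\C\text{-}\pd^{<\infty}\big)\qquad\text{and}\qquad\C\text{-}\pd^{<\infty}=r\mathcal{G}(\C)^{\perp_1}.$$
Throughout I will use the standing hypotheses that $(\mathscr{U},\mathscr{V})$ is hereditary with kernel $\C$ (so $\C\bot\C$) and has enough injectives, and the Auslander--Buchweitz context hypothesis, whose key content for us is that $\A=r\mathcal{G}(\C)\text{-}\pd^{<\infty}$.

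The first step is the orthogonality statement $\Ext^{\geq1}_{\A}(G,Y)=0$ whenever $G\in r\mathcal{G}(\C)$ and $Y\in\C\text{-}\pd^{<\infty}$: since $r\mathcal{G}(\C)\subseteq{}^{\perp}\C$ and $Y$ admits a finite resolution by objects of $\C$, this is immediate by dimension shifting along that resolution. It gives the inclusions $r\mathcal{G}(\C)\subseteq{}^{\perp_1}(\C\text{-}\pd^{<\infty})$ and $\C\text{-}\pd^{<\infty}\subseteq r\mathcal{G}(\C)^{\perp_1}$. For the reverse inclusions I feed $\A=r\mathcal{G}(\C)\text{-}\pd^{<\infty}$ into Theorem \ref{3.10}. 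Given $A\in{}^{\perp_1}(\C\text{-}\pd^{<\infty})$, apply Theorem \ref{3.10} with $n=r\mathcal{G}(\C)\text{-}\pd A<\infty$ to get an exact sequence $0\to H\to G\to A\to 0$ with $G\in r\mathcal{G}(\C)$ and $\C\text{-}\pd H<\infty$; since $\Ext^1_{\A}(A,H)=0$ this sequence splits, so $A$ is a direct summand of $G$, and hence $A\in r\mathcal{G}(\C)$ by Proposition \ref{extension-closed}(1). Dually, given $A\in r\mathcal{G}(\C)^{\perp_1}$, part (3) of Theorem \ref{3.10} gives an exact sequence $0\to A\to H'\to G'\to 0$ with $G'\in r\mathcal{G}(\C)$ and $\C\text{-}\pd H'<\infty$; since $\Ext^1_{\A}(G',A)=0$ this splits, so $A$ is a direct summand of $H'$, and hence $A\in\C\text{-}\pd^{<\infty}$ by Proposition \ref{4.5}(3). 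This proves both identities, and therefore the pair is a cotorsion pair.

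I do not expect a genuine obstacle: the argument is essentially formal once Theorem \ref{3.10} and Proposition \ref{4.5} are available. The only points requiring a little care are the dimension-shifting computation establishing the orthogonality, and the bookkeeping observation that each half of the cotorsion-pair axiom is powered by exactly one of the two approximation sequences of Theorem \ref{3.10}, with the appropriate closure-under-direct-summands statement (Proposition \ref{extension-closed}(1) for $r\mathcal{G}(\C)$, Proposition \ref{4.5}(3) for $\C\text{-}\pd^{<\infty}$) upgrading ``direct summand of an object of the subcategory'' to ``object of the subcategory.''
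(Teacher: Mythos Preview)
Your proposal is correct and follows essentially the same approach as the paper's own proof: reduce via Proposition \ref{4.5}(2), establish $r\mathcal{G}(\C)\perp\C\text{-}\pd^{<\infty}$ by dimension shifting, and then use the two approximation sequences of Theorem \ref{3.10} (together with closure under direct summands from Proposition \ref{extension-closed}(1) and Proposition \ref{4.5}(3)) to obtain the reverse inclusions. The arguments match step for step.
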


\begin{proof}
By Proposition \ref{4.5}(2),
$\mathscr{U}$-$\pd^{<\infty}\cap \mathscr{V}=\C$-$\pd^{<\infty}$.
It is easy to see that $r\mathcal{G}(\C)\bot\C$-$\pd^{<\infty}$ by the dimension shifting.
Thus $r\mathcal{G}(\C)\subseteq{^{\bot_1}}(\C$-$\pd^{<\infty})$ and
$\C$-$\pd^{<\infty}\subseteq {r\mathcal{G}(\C)^{\bot_1}}$.
Since $(r\mathcal{G}(\C),\C$-$\pd^{<\infty},\C)$
is an Auslander-Buchweitz context by assumption, we have $\A=r\mathcal{G}(\mathscr{C})$-$\pd^{<\infty}$.

Let $A\in {^{\bot_1}}(\C$-$\pd^{<\infty})$. Then
$r\mathcal{G}(\mathscr{C})$-$\pd A<\infty$. By Theorem \ref{3.10}, there exists an exact sequence
$$0\to H \to G \to A\to 0$$
in $\A$ with $G\in r\mathcal{G}(\C)$ and $\C$-$\pd H<\infty$. Thus this exact sequence splits,
and hence $A$ is isomorphic to a direct summand of $G$. Then by Proposition \ref{extension-closed}(1),
we have $A\in r\mathcal{G}(\C)$ and ${^{\bot_1}}(\C$-$\pd^{<\infty})\subseteq r\mathcal{G}(\C)$.

Now let $A\in {r\mathcal{G}(\C)^{\bot_1}}$. Note that $r\mathcal{G}(\mathscr{C})$-$\pd A<\infty$.
By Theorem \ref{3.10}, there exists an exact sequence
$$0\to A \to H^{'} \to G^{'}\to 0$$
in $\A$ with $G^{'}\in r\mathcal{G}(\C)$ and $\C$-$\pd H^{'}<\infty$. Thus this exact sequence splits,
and hence $A$ is isomorphic to a direct summand of $H^{'}$. Then by Proposition \ref{4.5}(3),
we have $\C$-$\pd A<\infty$ and ${r\mathcal{G}(\C)^{\bot_1}}\subseteq\C$-$\pd^{<\infty}$.
\end{proof}

We also need the following easy observation.

\begin{lemma}\label{4.7}
\begin{enumerate}
\item[]
\item[(1)] $\C$ is an injective cogenerator for $r\mathcal{G}(\C)$.
\item[(2)] $r\mathcal{G}(\C)\cap \C$-$\pd^{<\infty}=\C$.
\end{enumerate}
\end{lemma}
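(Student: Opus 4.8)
The plan is to prove the two parts of Lemma \ref{4.7} by reducing everything to facts already established in Section 3 together with the hereditary-cotorsion-pair properties recorded in Section 4.

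\smallskip

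\textbf{Part (1).} First I would recall that in this section $\C = \mathscr{U}\cap\mathscr{V}$ with $(\mathscr{U},\mathscr{V})$ hereditary, so $\C\perp\C$; hence all the results of Section~3 (in particular Lemma~\ref{3.3} and Proposition~\ref{extension-closed}) are available, and $\C\subseteq r\mathcal{G}(\C)$. To show $\C$ is a cogenerator for $r\mathcal{G}(\C)$: given $G\in r\mathcal{G}(\C)=\,{}^\perp\mathscr{C}\cap\cores\widetilde{\mathscr{C}}$, the defining coresolution $0\to G\to C^0\to C^1\to\cdots$ (exact and $\Hom_\A(-,\C)$-exact, with all $C^i\in\C$) gives in particular a short exact sequence $0\to G\to C^0\to N\to 0$ with $C^0\in\C$, and by Lemma~\ref{3.3} (sufficiency direction, run backwards: $N$ is the cosyzygy, so $N\in\cores\widetilde{\mathscr{C}}$, and $\C\perp\C$ forces $N\in{}^\perp\C$, hence $N\in r\mathcal{G}(\C)$) we get the required conflation with cokernel back in $r\mathcal{G}(\C)$. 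That $r\mathcal{G}(\C)\perp\C$ is immediate from the definition $r\mathcal{G}(\C)\subseteq{}^\perp\mathscr{C}$. Together these two facts say exactly that $\C$ is an injective cogenerator for $r\mathcal{G}(\C)$.

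\smallskip

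\textbf{Part (2).} The inclusion $\C\subseteq r\mathcal{G}(\C)\cap\C\text{-}\pd^{<\infty}$ is trivial. For the reverse, take $A\in r\mathcal{G}(\C)$ with $\C\text{-}\pd A = n<\infty$; I would induct on $n$. If $n=0$ then $A\in\C$. If $n\geq 1$, pick an exact sequence $0\to K\to C_0\to A\to 0$ with $C_0\in\C$ and $\C\text{-}\pd K\leq n-1$. Since $C_0\in\C\subseteq r\mathcal{G}(\C)$ and $A\in r\mathcal{G}(\C)$, Proposition~\ref{extension-closed}(2) (closure under kernels of epimorphisms) gives $K\in r\mathcal{G}(\C)$, so the induction hypothesis yields $K\in\C$. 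Now $0\to K\to C_0\to A\to 0$ is a short exact sequence with $K,C_0\in\C$; because $A\in r\mathcal{G}(\C)\subseteq{}^\perp\mathscr{C}$ we have $\Ext^1_\A(A,K)=0$ (as $K\in\C$), so the sequence splits, and $A$ is a direct summand of $C_0\in\C$. Since $\C$ is closed under direct summands (subcategories are assumed closed under isomorphisms and $\C$ additive — more precisely this is part of $r\mathcal{G}(\C)$ being closed under summands, but here we need $\C$ itself; I would invoke that $\C=\mathscr{U}\cap\mathscr{V}$ and both $\mathscr{U},\mathscr{V}$ are closed under direct summands as halves of a cotorsion pair), we conclude $A\in\C$.

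\smallskip

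The only genuinely delicate point is making sure $\C$ — not merely $r\mathcal{G}(\C)$ — is closed under direct summands in the last step of Part~(2); I expect to handle this by the cotorsion-pair remark just mentioned (each of ${}^{\perp_1}\mathscr{V}$ and $\mathscr{U}^{\perp_1}$ is visibly summand-closed). Everything else is a direct application of Lemma~\ref{3.3} and Proposition~\ref{extension-closed}, so the proof will be short.
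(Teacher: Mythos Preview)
Your proof is correct and follows essentially the same approach as the paper's: Part~(1) simply unpacks what the paper calls ``trivial'' (via Lemma~\ref{3.3}), and Part~(2) is the same dimension-shifting/induction argument, with the direct-summand closure of $\C$ correctly sourced from the cotorsion-pair structure. The only minor difference is that the paper phrases (2) as the slightly stronger statement ${}^{\perp}\C\cap\C\text{-}\pd^{<\infty}=\C$ (using only $\C\perp\C$ and the long exact sequence, rather than invoking Proposition~\ref{extension-closed}(2) to stay inside $r\mathcal{G}(\C)$), from which (2) follows since $r\mathcal{G}(\C)\subseteq{}^{\perp}\C$; this avoids tracking the $\cores\widetilde{\C}$ condition through the induction, but your route works equally well.
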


\begin{proof}
The assertion (1) is trivial.
Since $\C\bot\C$, we have $^{\bot}\C\cap \C$-$\pd^{<\infty}=\C$ by the dimension shifting,
so the assertion (2) follows.
\end{proof}

Now we are in a position to give the following theorem.

\begin{theorem}\label{4.8}
If $(\mathscr{U},\mathscr{V})$ has enough injectives, then
$(r\mathcal{G}(\C),\C$-$\pd^{<\infty},\C)$ is a weak Auslander-Buchweitz context.
\end{theorem}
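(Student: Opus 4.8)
The plan is to verify directly that the triple $(r\mathcal{G}(\C),\C$-$\pd^{<\infty},\C)$ fulfils the three defining conditions of a weak Auslander-Buchweitz context listed in Definition \ref{4.1}, assembling the structural results already obtained in Section 3 and in the earlier part of this section; essentially no new argument is needed beyond careful bookkeeping.

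For condition (1), I will simply invoke Proposition \ref{extension-closed}: it asserts precisely that $r\mathcal{G}(\C)$ is closed under extensions, direct summands and finite direct sums, and under kernels of epimorphisms, which is exactly what is demanded of the first subcategory.

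For condition (2) there are two things to check. First, since $\C\perp\C$ we have $\C\subseteq r\mathcal{G}(\C)$ (as recorded at the start of the relevant discussion in Section 3), so any object of finite $\C$-projective dimension has finite $r\mathcal{G}(\C)$-projective dimension; hence $\C$-$\pd^{<\infty}\subseteq r\mathcal{G}(\C)$-$\pd^{<\infty}$. Second, the closure of $\C$-$\pd^{<\infty}$ under extensions, cokernels of monomorphisms and direct summands is exactly the content of Proposition \ref{4.5}(3), which is where the hypothesis that $(\mathscr{U},\mathscr{V})$ has enough injectives is consumed. For condition (3), I will use Lemma \ref{4.7}: part (2) gives $r\mathcal{G}(\C)\cap\C$-$\pd^{<\infty}=\C$, so that $\omega:=\C$ is indeed the intersection of the first two subcategories, while part (1) says $\C$ is an injective cogenerator for $r\mathcal{G}(\C)$. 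Combining these three verifications yields the theorem.

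Since nearly all of the substantive work has been carried out in the preceding statements, the proof itself is short; the genuine obstacle—already overcome upstream—lies in Proposition \ref{4.5}, specifically in establishing the identity $r\mathcal{G}(\C)\cap\mathscr{U}$-$\pd^{<\infty}=\mathscr{U}$ and deducing $\C$-$\pd^{<\infty}=\mathscr{U}$-$\pd^{<\infty}\cap\mathscr{V}$. There one exploits that $\C$ is an injective cogenerator for $\mathscr{U}$ (Lemma \ref{4.4}, again a consequence of ``enough injectives''), the splitting argument coming from $\Ext^1_{\A}(A,C)=0$ for $A\in r\mathcal{G}(\C)$ and $C\in\C$, Theorem \ref{3.10}, and the resolving/coresolving closure properties of the hereditary cotorsion pair; once that is in hand, the final assembly is routine.
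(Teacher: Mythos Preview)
Your proposal is correct and follows essentially the same route as the paper: the paper's proof is the single line ``It follows from Propositions \ref{extension-closed} and \ref{4.5}(3), and Lemma \ref{4.7},'' and you have simply unpacked which of those ingredients addresses each clause of Definition \ref{4.1}, including the easy inclusion $\C\text{-}\pd^{<\infty}\subseteq r\mathcal{G}(\C)\text{-}\pd^{<\infty}$ that the paper leaves implicit.
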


\begin{proof}
It follows from Propositions \ref{extension-closed}  and \ref{4.5}(3), and Lemma \ref{4.7}.
\end{proof}

By Theorem \ref{4.8} and \cite[Theorem 1.12.10]{Ha00}, we immediately get the following corollary.

\begin{corollary}\label{4.9}
If $(\mathscr{U},\mathscr{V})$ has enough injectives, then we have
\begin{enumerate}
\item[(1)] ${\C}$ is a unique additive injective cogenerator for $r\mathcal{G}(\C)$ in the sense that
if $\mathscr{E}$ is an injective cogenerator for $r\mathcal{G}(\C)$, then $\operatorname{add} \mathscr{E}={\C}$,
where $\operatorname{add} \mathscr{E}$ is the subcategory of $\A$ consisting of direct summands of finite
direct sums of objects in $\mathscr{E}$.
\item[(2)] For any $A\in r\mathcal{G}(\C)$-$\pd^{<\infty}$, the following statements are equivalent.

~~~~$\mathrm{(i)}$ $A\in r\mathcal{G}(\C)$;
~~~~$\mathrm{(ii)}$ $A\in{^\perp(\C}$-$\pd^{<\infty})$;
~~~~$\mathrm{(iii)}$ $A\in{^{\perp_1}(\C}$-$\pd^{<\infty})$;
~~~~$\mathrm{(iv)}$ $A\in {^\perp\C}$.
\item[(3)] For any $A\in r\mathcal{G}(\C)$-$\pd^{<\infty}$, the following statements are equivalent.

~~$\mathrm{(i)}$ $A\in \C$-$\pd^{<\infty}$;
~~$\mathrm{(ii)}$ $A\in{r\mathcal{G}(\C)^\perp}$;
~~$\mathrm{(iii)}$ $A\in{r\mathcal{G}(\C)^{\perp_1}}$;
~~$\mathrm{(iv)}$ $r\mathcal{G}(\C)\mbox{-}\id A <\infty$ and $A\in {\C^\perp}$.
\item[(4)] For any $A\in r\mathcal{G}(\C)$-$\pd^{<\infty}$, we have
\begin{align*}
r\mathcal{G}(\C)\text{-}\pd A & =\operatorname{inf}\{n\mid \Ext^{n+1}_\A(A,C)=0 \mbox{ for any }C\in \C)\} \\
   & =\operatorname{inf}\{n\mid \Ext^{n+1}_\A(A,W)=0 \mbox{ for any }W\in \C\text{-}\pd^{<\infty}\}.
\end{align*}
\item[(5)] For any $A\in\C\text{-}\pd^{<\infty}$, we have $r\mathcal{G}(\C)\text{-}\pd A=\C\text{-}\pd A$.
\item[(6)] For an exact sequence
$$0\rightarrow L\rightarrow M\rightarrow N\rightarrow 0$$ in $\A$, if any two of $L$, $M$ and $N$ are in
$r\mathcal{G}(\C)$-$\pd^{<\infty}$, then so is the third one.
    \end{enumerate}
\end{corollary}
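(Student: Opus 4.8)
The plan is to obtain all six assertions from Theorem~\ref{4.8} --- which says that the triple $(r\mathcal{G}(\C),\C\text{-}\pd^{<\infty},\C)$ is a weak Auslander--Buchweitz context --- together with the general machinery for such contexts, in particular \cite[Theorem~1.12.10]{Ha00}. Throughout one writes $\X:=r\mathcal{G}(\C)$, $\Y:=\C\text{-}\pd^{<\infty}$ and $\omega:=\C$; by Lemma~\ref{4.7} one has $\omega=\X\cap\Y$ with $\omega$ an injective cogenerator for $\X$. The first thing to record is that this context is \emph{hereditary}, i.e. $\X\bot\Y$: since $\C\bot\C$ one has $\X={}^{\perp}\C\cap\cores\widetilde{\C}\subseteq{}^{\perp}\C$, and dimension shifting along a $\C$-resolution of any object of $\Y=\C\text{-}\pd^{<\infty}$ then yields $\X\bot\Y$ (and in particular $\Y\subseteq\C^{\perp}$). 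This is precisely what makes the $\Ext^{1}$- and $\Ext^{\geq 1}$-versions of the orthogonality conditions appearing in (1)--(4) interchangeable, so that it suffices to handle the $\Ext^{\geq 1}$-versions.

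Concretely, item (1) is the uniqueness, up to $\add$, of the injective cogenerator of $\X$ in a weak Auslander--Buchweitz context \cite[Theorem~1.12.10]{Ha00}; alternatively one argues directly, using that an injective cogenerator $\mathscr{E}$ for $\X$ satisfies $\mathscr{E}\subseteq\X$, that each $E\in\mathscr{E}$ embeds in some $C\in\C$ with cokernel in $\X$ so the sequence splits against $\mathscr{E}$, and symmetrically that each $C\in\C$ embeds in some object of $\mathscr{E}$ with cokernel in $\X$ so the sequence splits against $\C$, whence $\add\mathscr{E}=\C$. For items (2) and (3) one applies Theorem~\ref{3.10} to a given $A\in\X\text{-}\pd^{<\infty}$ to obtain a ``special $\X$-precover'' $0\to H\to G\to A\to 0$ and a ``special $\Y$-preenvelope'' $0\to A\to H'\to G'\to 0$ with $G,G'\in\X$ and $H,H'\in\C\text{-}\pd^{<\infty}$; then $A\in\X$ forces $H\in\X\cap\C\text{-}\pd^{<\infty}=\C$ by Proposition~\ref{extension-closed}(2) and Lemma~\ref{4.7}, so the first sequence splits, while conversely each of the $\Ext$-vanishing conditions of (2) forces $\Ext^{1}_{\A}(A,H)=0$, so the sequence splits and $A$ is a summand of $G\in\X$. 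The equivalences in (3) come out dually from the second sequence (whose splitting is governed by $\Ext^{1}_{\A}(G',A)$), the last clause being the identification of $\Y$ inside $\X\text{-}\pd^{<\infty}$ supplied by \cite[Theorem~1.12.10]{Ha00}. Items (4) and (5) are the matching dimension formulas: Theorem~\ref{3.10} and Lemma~\ref{3.9} allow a shortest $\X$-resolution of $A$ to be rebuilt with all terms below the top lying in $\C$, after which the stated $\Ext$-criterion is routine dimension shifting against $\C$ using $\X\bot\C$ and $\X\bot\Y$; for (5) one combines this with $\C\text{-}\pd A=\sup\{n\mid\Ext^{n+1}_{\A}(A,C)\neq 0\text{ for some }C\in\C\}$, valid since $\C\bot\C$. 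Finally, item (6) is the ``two out of three'' closure of $r\mathcal{G}(\C)\text{-}\pd^{<\infty}=\X\text{-}\pd^{<\infty}$, obtained by the usual long-exact-sequence bookkeeping from the facts that $\X$ is closed under extensions, kernels of epimorphisms and direct summands (Proposition~\ref{extension-closed}) and that every object of finite $\X$-dimension admits the two approximation sequences above.

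The upshot is that, granting Theorem~\ref{4.8}, the corollary carries no genuinely new difficulty: the work is in the translation --- verifying that $(\X,\Y,\omega)$ satisfies the exact hypotheses of \cite[Theorem~1.12.10]{Ha00}, chiefly the hereditary property $\X\bot\Y$ (without which the $\Ext^{1}$- and $\Ext^{\geq 1}$-conditions would not coincide), and keeping straight which of the two sequences of Theorem~\ref{3.10} controls membership in $\X$ and which controls membership in $\Y$. I expect clause (iv) of items (2) and (3), where one has to pass between ``$A\in\C^{\perp}$ together with a finiteness condition on the $r\mathcal{G}(\C)$-(co)dimension'' and the cleaner orthogonality conditions, to be the fussiest point, though it still reduces to dimension shifting against $\C$ once the context has been set up.
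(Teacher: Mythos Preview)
Your proposal is correct and follows the same route as the paper: the paper's entire proof is the single sentence ``By Theorem~\ref{4.8} and \cite[Theorem~1.12.10]{Ha00}, we immediately get the following corollary,'' and your plan is precisely to invoke Theorem~\ref{4.8} and then read off (1)--(6) from Hashimoto's general result, with the additional detail you supply (heredity $\X\bot\Y$, the approximation sequences from Theorem~\ref{3.10}, etc.) being a fleshed-out version of what the paper leaves implicit in that citation.
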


As a dual of Definition \ref{4.1}, we give the following definition.

\begin{definition}\label{4.10}
 A triple $(\mathscr{X},\mathscr{Y},\omega)$ of subcategories of $\mathcal{A}$ is called
a {\it weak co-Auslander-Buchweitz context} if the following conditions are satisfied.
\begin{enumerate}
\item[(1)] $\mathscr{X}$ is closed under extensions, cokernels of monomorphisms and direct summands.
\item[(2)] $\mathscr{Y}\subseteq \mathscr{X}$-$\id^{<\infty}$ and $\mathscr{Y}$ is
closed under kernels of epimorphisms, extensions and direct summands.
\item[(3)] $\omega=\mathscr{X}\cap \mathscr{Y}$ and $\omega$ is a projective generator for $\mathscr{X}$.
\end{enumerate}
A weak co-Auslander-Buchweitz context $(\mathscr{X},\mathscr{Y},\omega)$ is called a {\it co-Auslander-Buchweitz context}
if $\mathscr{X}$-$\id^{<\infty}=\mathcal{A}$.
\end{definition}

By using arguments completely dual to that in the proofs of Proposition \ref{4.6} and Theorem \ref{4.8}, we have the following proposition.

\begin{proposition}\label{4.11}
Assume that $(\mathscr{U},\mathscr{V})$ has enough projectives. If $(l\mathcal{G}(\C), \mathscr{C}$-$\id^{<\infty},\C)$
is a co-Auslander-Buchweitz context, then
$$(l\mathcal{G}(\C),\mathscr{U}\cap \mathscr{V}\text{-}\id^{<\infty})=(r\mathcal{G}(\C), \mathscr{C}\text{-}\id^{<\infty})$$
is a cotorsion pair.
\end{proposition}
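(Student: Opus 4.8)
The plan is to obtain Proposition \ref{4.11} as the formal dual of Proposition \ref{4.6} (together with the dual of Theorem \ref{4.8} and Proposition \ref{4.5}), realized by passing to the opposite category $\A^{\opp}$. The translation dictionary is: $(\mathscr{U},\mathscr{V})$ having enough projectives in $\A$ is precisely $(\mathscr{V},\mathscr{U})$ having enough injectives in $\A^{\opp}$; $l\mathcal{G}(\C)$ computed in $\A$ equals $r\mathcal{G}(\C)$ computed in $\A^{\opp}$; $\C\text{-}\id^{<\infty}$ in $\A$ equals $\C\text{-}\pd^{<\infty}$ in $\A^{\opp}$; and a co-Auslander-Buchweitz context in $\A$ is an Auslander-Buchweitz context in $\A^{\opp}$. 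Under this dictionary the conclusion of Proposition \ref{4.6}, applied in $\A^{\opp}$, reads back in $\A$ as the asserted equality of cotorsion pairs, with the members of each pair appearing in the reverse order (as is unavoidable when a cotorsion pair is transported through the opposite category). Thus it suffices to prove directly in $\A$ that $(\C\text{-}\id^{<\infty},\,l\mathcal{G}(\C))$ is a cotorsion pair and that its left member agrees with $\mathscr{U}\cap\mathscr{V}\text{-}\id^{<\infty}$.

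First I would record the dual of Proposition \ref{4.5}(2), namely $\C\text{-}\id^{<\infty}=\mathscr{U}\cap\mathscr{V}\text{-}\id^{<\infty}$; this is exactly the identity that makes the two cotorsion pairs in the statement coincide, so once it is in hand the problem reduces to showing that $(\C\text{-}\id^{<\infty},\,l\mathcal{G}(\C))$ is a cotorsion pair. For the two easy inclusions I would use that $\C\perp\C$: dimension shifting along a finite $\C$-coresolution shows $\C\text{-}\id^{<\infty}\perp\,l\mathcal{G}(\C)$, whence $\C\text{-}\id^{<\infty}\subseteq{}^{\perp_1}l\mathcal{G}(\C)$ and $l\mathcal{G}(\C)\subseteq(\C\text{-}\id^{<\infty})^{\perp_1}$.

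For the reverse inclusions I would invoke the co-Auslander-Buchweitz hypothesis, which forces $\A=l\mathcal{G}(\C)\text{-}\id^{<\infty}$, and the dual of Theorem \ref{3.10}: every $A\in\A$ sits in exact sequences $0\to A\to G\to H\to 0$ and $0\to G'\to H'\to A\to 0$ with $G,G'\in l\mathcal{G}(\C)$ and $H,H'\in\C\text{-}\id^{<\infty}$. If $A\in(\C\text{-}\id^{<\infty})^{\perp_1}$, then $\Ext^1_{\A}(H,A)=0$, so the first sequence splits and $A$ is a direct summand of $G\in l\mathcal{G}(\C)$; hence $A\in l\mathcal{G}(\C)$ by closure under direct summands (the dual of Proposition \ref{extension-closed}(1)). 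Symmetrically, if $A\in{}^{\perp_1}l\mathcal{G}(\C)$, then $\Ext^1_{\A}(A,G')=0$, so the second sequence splits and $A$ is a direct summand of $H'\in\C\text{-}\id^{<\infty}$; hence $A\in\C\text{-}\id^{<\infty}$ by the dual of Proposition \ref{4.5}(3). Together with the easy inclusions this gives $\C\text{-}\id^{<\infty}={}^{\perp_1}l\mathcal{G}(\C)$ and $l\mathcal{G}(\C)=(\C\text{-}\id^{<\infty})^{\perp_1}$, which is the cotorsion-pair assertion.

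I expect the only genuinely delicate point to be the bookkeeping of the dualization, not any new computation. One must verify that each hypothesis and each auxiliary result transforms to the correct one-sided form in $\A^{\opp}$: enough projectives versus enough injectives, the co-Auslander-Buchweitz context versus the Auslander-Buchweitz context, closure of $l\mathcal{G}(\C)$ under cokernels of monomorphisms, and closure of $\C\text{-}\id^{<\infty}$ under kernels of epimorphisms rather than cokernels of monomorphisms; one must also check that the two members of the resulting cotorsion pair are recorded in the order intended by the statement. The diagram chases underlying the dual of Proposition \ref{4.5} are routine transcriptions of those already carried out there, and once the dictionary above is fixed the whole argument is a verbatim dualization of the proofs of Proposition \ref{4.6} and Theorem \ref{4.8}.
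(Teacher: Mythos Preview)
Your proposal is correct and follows exactly the paper's own approach: the paper proves Proposition~\ref{4.11} by declaring it to be the formal dual of Proposition~\ref{4.6} (and Theorem~\ref{4.8}), and your write-up simply spells out that dualization in detail. Your observation that the dualized cotorsion pair naturally comes out as $(\C\text{-}\id^{<\infty},\,l\mathcal{G}(\C))$ rather than in the order displayed in the statement is a valid catch and worth flagging.
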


\begin{theorem}\label{4.12}
If $(\mathscr{U},\mathscr{V})$ has enough projectives,
then $(l\mathcal{G}(\C), \mathscr{C}$-$\id^{<\infty},\C)$ is a weak co-Auslander-Buchweitz context.
\end{theorem}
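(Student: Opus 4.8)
The plan is to dualize the proof of Theorem \ref{4.8} essentially verbatim. Recall from the discussion following Definition \ref{3.1} that every statement established for $r\mathcal{G}(\C)$ admits a completely dual analogue for $l\mathcal{G}(\C)$; in particular, the dual of Proposition \ref{extension-closed} asserts that $l\mathcal{G}(\C)$ is closed under extensions, cokernels of monomorphisms and direct summands, which is exactly condition (1) of Definition \ref{4.10}. Likewise, the dual of Lemma \ref{4.7} gives that $\C$ is a projective generator for $l\mathcal{G}(\C)$ and that $l\mathcal{G}(\C)\cap\C\text{-}\id^{<\infty}=\C$; the latter holds because $l\mathcal{G}(\C)\subseteq\C^{\perp}$ and, by dimension shifting using $\C\perp\C$, one has $\C^{\perp}\cap\C\text{-}\id^{<\infty}=\C$. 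This yields condition (3) of Definition \ref{4.10}.

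It remains to verify condition (2), namely that $\C\text{-}\id^{<\infty}\subseteq l\mathcal{G}(\C)\text{-}\id^{<\infty}$ and that $\C\text{-}\id^{<\infty}$ is closed under kernels of epimorphisms, extensions and direct summands. The inclusion is immediate since $\C\subseteq l\mathcal{G}(\C)$. For the closure properties I would first record the dual of Lemma \ref{4.4}: since $(\mathscr{U},\mathscr{V})$ has enough projectives, each $V\in\mathscr{V}$ fits into an exact sequence $0\to V'\to U\to V\to 0$ with $V'\in\mathscr{V}$ and $U\in\mathscr{U}$; as $\mathscr{V}$ is coresolving (being closed under extensions by heredity) we get $U\in\mathscr{V}$, hence $U\in\C$, and then $\C\perp\mathscr{V}$ shows $\C$ is a projective generator for $\mathscr{V}$. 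With this in hand, the dual of Proposition \ref{4.5} runs through: one obtains $l\mathcal{G}(\C)\cap\mathscr{V}\text{-}\id^{<\infty}=\mathscr{V}$, then $\C\text{-}\id^{<\infty}=\mathscr{V}\text{-}\id^{<\infty}\cap\mathscr{U}$, and finally, using that $\C\text{-}\id^{<\infty}\subseteq{^{\perp}\C}$ by dimension shifting (so that every short exact sequence in $\A$ with last term in $\C\text{-}\id^{<\infty}$ is $\Hom_{\A}(-,\C)$-exact), the dual versions of \cite[Lemma 3.1(1)]{Hu} and \cite[Theorem 3.6(1)]{Hu} give closure under extensions and kernels of epimorphisms, while closure under direct summands follows as in the dual of Proposition \ref{4.5}(3) from the dual of Lemma \ref{4.3} together with closure of $\mathscr{U}$ and of $\mathscr{V}\text{-}\id^{<\infty}$ under direct summands. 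Assembling the dual of Proposition \ref{extension-closed}, the dual of Proposition \ref{4.5}(3) and the dual of Lemma \ref{4.7} then proves the theorem, exactly as Theorem \ref{4.8} follows from Propositions \ref{extension-closed} and \ref{4.5}(3) and Lemma \ref{4.7}.

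I do not expect any genuine obstacle beyond careful bookkeeping; the single point that needs attention is matching the correct side of the cotorsion-pair hypothesis with the correct side of the Gorenstein subcategory. The dual of ``$(\mathscr{U},\mathscr{V})$ has enough injectives'' is ``$(\mathscr{U},\mathscr{V})$ has enough projectives'', and under the latter the dual of Lemma \ref{4.4} produces a projective generator $\C$ for $\mathscr{V}$ (rather than an injective cogenerator for $\mathscr{U}$), which is precisely what feeds the dual of Proposition \ref{4.5}; simultaneously the roles of $\mathscr{U}\text{-}\pd$ and $\mathscr{V}\text{-}\id$ are interchanged throughout Section 4. Once these substitutions are made consistently, every push-out/pull-back diagram and every dimension-shifting argument in Sections 3 and 4 dualizes without further change.
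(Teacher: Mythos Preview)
Your proposal is correct and follows exactly the approach of the paper, which simply states that Theorem \ref{4.12} (and Proposition \ref{4.11}) are obtained ``by using arguments completely dual to that in the proofs of Proposition \ref{4.6} and Theorem \ref{4.8}''. You have carefully spelled out that dualization---matching ``enough projectives'' with the projective-generator version of Lemma \ref{4.4}, interchanging $\mathscr{U}$-$\pd$ with $\mathscr{V}$-$\id$, and invoking the dual forms of Proposition \ref{extension-closed}, Proposition \ref{4.5}(3) and Lemma \ref{4.7}---and there is nothing to add.
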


\vspace{0.5cm}

{\bf Acknowledgements.}
This research was partially supported by NSFC (Grant No. 11571164).
The authors thank the referee for very useful and detailed suggestions.

\end{document}